\newtheorem{lemma}{Lemma}
\newtheorem{corollary}[lemma]{Corollary}
\newtheorem{proposition}[lemma]{Proposition}
\newenvironment{proof}[1][Proof]{\textbf{#1.} }{\
\rule{0.5em}{0.5em}}
\def\Expect{\mathbb{E}}
\def\Prob{\mathbb{P}}
\def\Var{\mbox{Var}}
\def\Cov{\mbox{Cov}}
\newtheorem{ass}{Assumption}
\newtheorem{conj}{Conjecture}
\newcommand{\vc}[1]{\boldsymbol{#1}}
\begin{document}

\title{
The Second Order Terms of the Variance Curves\\
for Some Queueing Output Processes
}
\author{
\normalsize Sophie Hautphenne\thanks{
The University of Melbourne, Melbourne, Australia.},
\,
\normalsize Yoav Kerner\thanks{
Ben Gurion University, Beer Sheva, Israel.},
\, 
\normalsize Yoni Nazarathy\thanks{
The University of Queensland, Brisbane, Australia.
}
\, and
\normalsize Peter Taylor\thanks{
The University of Melbourne, Melbourne, Australia.
}.
}
\maketitle
\maketitle \thispagestyle{empty}

\begin{abstract}
We consider queueing output processes of some elementary queueing models such as the M/M/1/K queue and the M/G/1 queue. An important performance measure for these counting processes is their variance curve, indicating the variance of the number of served customers over a time interval.  Recent work has revealed some non-trivial properties dealing with the asymptotic rate at which the variance curve grows.  In this paper we add to the results by finding explicit expressions for the second order approximation of the variance curve, namely the y-intercept of the linear asymptote.

For M/M/1/K queues our results are based on the Drazin inverse of the generator.  It turns out that by viewing output processes as MAPs (Markovian Arrival Processes) and considering the Drazin inverse, one can obtain explicit expressions for the y-intercept, together with some further insight regarding the BRAVO effect (Balancing Reduces Asymptotic Variance of Outputs). For M/G/1 queues our results are based on a classic transform of D.J. Daley.  In this case we represent the y-intercept of the variance curve in terms of the first three moments of the service time distribution.

A further performance measure that we are able to calculate for both models, is the asymptotic covariance between the queue length and the number of arrivals or departures. In addition we shed light on a classic conjecture of Daley, dealing with characterization of stationary M/M/1 queues within the class of stationary M/G/1 queues, based on the variance curve.
\end{abstract}

{\small Keywords: Queueing, M/M/1/K Queue, M/G/1 Queue, MAPs, Characterization of the M/M/1 Queue, Output Processes.} \normalsize

\section{Introduction}
Many models in applied probability and stochastic operations research involve counting processes.  Such processes occur in supply chains, health care systems, communication networks as well as many other contexts involving service, logistics and/or technology. The canonical counting process example is the Poisson process. Generalizations include renewal processes, Markovian Arrival Processes (MAPs) (see for example \cite{bookLatoucheRamaswami1999}), or general simple point processes on the line (see for example \cite{daley2003itp}).  Counting process models arise naturally in applications involving the random occurrence of events.

Sometimes counting processes are applied in their own right, while at other times they constitute components of more complicated models such as queues, population processes or risk models.  In other instances, counting processes are implicitly defined and constructed through applied probability models. For example, a realization of a queue induces additional counting processes such as the departure process, $\{D(t),t \ge 0\}$, counting the number of serviced customers in the queue until time $t$.

Departure counting processes of queues have been heavily studied in classical applied probability, see for example \cite{Daley0510} and \cite{DisneyKonig0283} for extensive accounts. Nevertheless, many open questions remain, some of which have received attention in recent years. For example there are several open questions about the ability to describe $\{D(t)\}$ as a MAP as in \cite{bean2000map}, \cite{BeanEtAl0244}  and \cite{olivier1994existence}. Further, the discovery of the BRAVO effect (Balancing Reduces Asymptotic Variance of Outputs) has motivated research on the variability of departure processes of queues, particularly in critically loaded regimes. Recent papers on this topic are \cite{al2011asymptotic}, \cite{daley2011revisiting},\cite{daleyLeeNaz},   \cite{nazarathy2011variance} and \cite{NazarathyWeiss0336}.

Next to the mean curve, $m(t) := \Expect[D(t)]$, an almost equally important performance measure of a counting processes is the variance curve, $v(t):=\Var\big(D(t)\big)$. For example, for a Poisson process with rate $\alpha$, the variance curve
\[
v(t) = \alpha t
\] is the same as the mean curve.
For more complicated counting processes, the variance curve is not as simple and generally does not equal the mean curve.  For example, for a stationary (also known as {\em equilibrium}) renewal-process with inter-renewal times distributed as the sum of two independent exponential random variables, each with mean $(2 \alpha)^{-1}$, we have
\[
m(t) = \alpha t  - \frac{1}{4} + \frac{1}{4}e^{-4 \alpha t},
\qquad
v(t) = \alpha \frac{1}{2} t+ \frac{1}{8} -\frac{1}{8}e^{-4 \alpha t}.
\]
For the {\em ordinary} case of the same renewal process (the first inter-renewal time is distributed as all the rest) the variance curve is
\[
v(t) = \alpha\frac{1}{2} t+ \frac{1}{16} - te^{-4 \alpha t} - \frac{1}{16} e^{-8 \alpha t}.
\]
These explicit examples are taken from \cite{cox1962renewal}, Section 4.5.  In fact, for general, non-lattice, renewal processes (both equilibrium and ordinary), with inter-renewal times having a finite second moment, with squared coefficient of variation $c^2$, and mean $\alpha^{-1}$, it is well known that,
\begin{equation}
\label{eq:renewalAsympVar}
v(t) = \alpha c^2 t + o(t),
\end{equation}
yet in general, the full description of $v(t)$ (through the $o(t)$ term) is typically not as simple as in the examples above. Nevertheless, if the third moment of the inter-renewal time is finite, then
\begin{equation}
\label{eq:renewalAsymptote}
v(t) = 
\left\{
\begin{array}{ll}
\alpha c^2 t +\frac{5}{4}(c^4-1) - \frac{2}{3} (\gamma c^3 - 2) + o(1), & \mbox{for the equilibrium case,} \\
\alpha c^2 t + \frac{1}{2}(c^4-1) - \frac{1}{3}(\gamma c^3 -2) + o(1), & \mbox{for the ordinary case},
\end{array}
\right.
\end{equation}
where $\gamma$ is the skewness coefficient of the inter-renewal time\footnote{The skewness coefficient of a random variable $X$ is $\Expect\Big[\Big( \frac{X-\Expect[X]}{\sqrt{\Var(X)}} \Big)^3\Big]$.}. We remind the reader that for exponential random variables (making the renewal process a Poisson process), $c^2=1$ and $\gamma=2$, as indeed the ordinary and equilibrium versions a Poisson process are identical. See \cite{bookAsmussen2003} and \cite{daley2003itp} for more background on renewal processes. Equation \eqref{eq:renewalAsymptote} appears under a slightly different representation in \cite{cox1962renewal} and was essentially first found in \cite{smith1959cumulants}. Generalizations of renewal processes are in \cite{brown1975second}, \cite{daley1978asymptotic} and \cite{hunter1969moments}.

This emerging form of the variance curve,
\begin{equation}
\label{eq:varCurve}
v(t) = \overline{v} t + \overline{b} + o(1),
\end{equation}
is fruitful as it yields an asymptotically exact approximation for the variance curve for non-small $t$. A point to observe is that in general $\overline{b}$ depends on the version of the renewal process (ordinary vs. equilibrium) while $\overline{v}$ does not. We refer to $\overline{v}$ as the {\em  asymptotic variance rate} and to $\overline{b}$ as the {\em y-intercept}.  Since the latter depends on the initial conditions, we generally employ the notation $\overline{b}_e$ for the stationary (equilibrium) system, $\overline{b}_0$ for systems starting empty and $\overline{b}_\theta$ for systems with arbitrary initial conditions.

Moving on from renewal processes to implicitly defined counting processes, the variance curve is typically more complicated to describe and characterize. For example, while the output of a stationary M/M/1 queue with arrival rate $\lambda$ and service rate $\mu$,  is simply a Poisson process with rate $\lambda$  (see \cite{bookKelly1979}), the variance curve when the system starts empty at time $0$ is much more complicated than $v(t)=\lambda t$. It can be represented in terms of integrals of expressions involving Bessel functions of the first kind, and requires several lines to be written out fully (as in Theorem 5.1 of \cite{al2011asymptotic}). Nevertheless (see Theorem 5.2 in \cite{al2011asymptotic}) the curve is sensibly approximated as follows: 
\begin{equation}
\label{eq:mm1Var}
v(t) =
\left\{
\begin{array}{ll}
\lambda t - \frac{\rho}{(1-\rho)^2}  + o(1),      &          \mbox{{\rm if } $\rho < 1$,}
  \vspace{0.2cm}\\
  \vspace{0.2cm}
 2 (1-\frac{2}{\pi}) \lambda t -  \sqrt{\frac{\lambda}{\pi}}\,t^{1/2}+\frac{\pi-2}{4 \pi} + o(1),
  & \mbox{{\rm if } $\rho=1$,} \\
\mu     t - \frac{\rho}{(1-\rho)^2}  + o(1),      &          \mbox{{\rm if } $\rho > 1$,}
\end{array}
\right.
\end{equation}
where $\rho := \lambda/\mu$.

As observed from the formula above, it may be initially quite surprising that the asymptotic variance rate is reduced by a factor of $2(1-2/\pi) \approx 0.73$ when $\rho$ changes from being approximately $1$ to exactly~$1$. This is a manifestation of the BRAVO effect. BRAVO was first observed for M/M/1/K queues in \cite{NazarathyWeiss0336} in which case, as $K\rightarrow\infty$, the factor is $2/3$ as is (re)demonstrated in this paper. It was later analyzed for M/M/1 queues and more generally GI/G/1 queues in \cite{al2011asymptotic}. BRAVO was numerically conjectured for GI/G/1/K queues in \cite{nazarathy2011variance}, and observed for multi-server M/M/s/K queues in the many-server scaling regime in \cite{daleyLeeNaz}.

Our focus in this paper is on the more subtle y-intercept, $\overline{b}$ term. For a stationary M/M/1, $\{D(t)\}$ is a Poisson process and thus $\overline{b}_e=0$. As opposed to that, for the $M/M/1$ queue starting empty, it follows from \eqref{eq:mm1Var} that $\overline{b}_0 = -\rho/(1-\rho)^2$ as long as $\rho \neq 1$. When $\rho =1$, we also see from \eqref{eq:mm1Var} that the y-intercept does not exist as there is no linear asymptote for the variance curve. This can happen more generally: If for example there is sufficient long range dependence in the counting process, then the variance can grow super-linearly (see \cite{daley1997long} for some examples). This demonstrates that the asymptotic variance rate, $\overline{v}$, and the y-intercept, $\overline{b}$, need not exist for every counting process.  Nevertheless, for a variety of models and situations, both $\overline{v}$ and $\overline{b}$ exist, and thus the linear asymptote is well-defined. In such cases, having a closed formula is beneficial for performance analysis of the model at hand. 

We are now faced with the challenge of finding the y-intercept term for other counting processes generated by queues. In this paper we carry out such an analysis for two models related to the M/M/1 queue: a finite capacity M/M/1/K queue, and an infinite capacity M/G/1 queue. Besides obtaining explicit formulas for $\overline{b}_e, \overline{b}_0$ and $\overline{b}_\theta$, our investigation also pinpoints some of the analytical challenges involved and raises some open questions. Here is a summary of our main contributions:
\begin{description}
\item {\bf M/M/1/K queues:}  In this case the departure process is a MAP. The linear asymptote is then given by formulas based on the matrix $\Lambda^-:=({\mathbf 1} \vc\pi - \Lambda )^{-1}$, where $\Lambda$ is the generator matrix of the (finite) birth-death process, $\vc\pi$ is its stationary distribution taken as a row vector, and ${\mathbf 1}$ is a column vector of $1$'s. In the case where $\rho=1$, the distribution $\vc\pi$ is uniform and an explicit expression for $\Lambda^-$ was previously found, which in turn yielded the equilibrium version of the y-intercept in Proposition~4.4 of  \cite{NazarathyWeiss0336}:
\[
\overline{b}_e = 
\frac{7K^4+28K^3+37K^2+18K}{180(K+1)^2}.
\]
When $\rho \neq 1$, the form of the inverse $\Lambda^-$ is more complicated and an expression for $\overline{b}$ has not been previously known.  We are now able to find such an expression for both the stationary version and for arbitrary initial conditions. Our results are based on relating $\Lambda^-$ to the matrix
\[
\Lambda^{\sharp}=\int_0^\infty (P(t) - \vc 1\vc \pi) \, dt,
\]
where $P(\cdot)$ is the transition probability kernel of the birth-death process. The matrix $\Lambda^{\sharp}$ is called the \textit{Drazin inverse} of $\Lambda$, and we are able to provide explicit expressions for the entries of this matrix.  Our contribution also encompasses some useful results regarding arbitrary MAPs (Markovian Arrival Processes) which, to the best of our knowledge, have not appeared elsewhere. These results are used to find $\overline{b}_\theta$, as well as related covariances of the M/M/1/K queue.
\item {\bf Stable M/G/1 queues with finite third moment of G:} Having a finite third moment for the service time distribution ensures that queue lengths have a finite variance. When the service time distribution is not exponential, the form of $\overline{b}$ was previously not known. Our contribution is in finding an exact expression for the $\overline{b}$ term based on the first three moments of $G$. We begin with $\overline{b}_e$, after which we employ a simple coupling argument to find $\overline{b}_\theta$ and $\overline{b}_0$.
\end{description}

The structure of the rest of the paper is as follows: In Section~\ref{sec:MM1K} we present our M/M/1/K queue results for $\overline{b}$ together with a discussion of the Drazin inverse and its application for MAPs. Further, we find related covariances. In Section~\ref{sec:mainResMG1} we present our M/G/1 queue results for $\overline{b}$. In addition we discuss a related conjecture of Daley, dealing with a characterization of the M/M/1 queue within the class of stationary M/G/1 queues. We also find related covariances in the system.    We conclude in Section~\ref{sec:conc}.


\section{The M/M/1/K Queue}
\label{sec:MM1K}

We begin our investigation with the M/M/1/K queue, where $K$ denotes the total capacity of the system. In this case, it is well known that the departure process $\{D(t)\}$ is a MAP and is only a renewal processes when $K=1$. Some standard references on MAPs are  \cite{bookAsmussen2003} and \cite{bookLatoucheRamaswami1999}. To avoid confusion, we note that in \cite{bookAsmussen2003} these types of processes are referred to as MArPs.

Denote the arrival rate by $\lambda >0$, the service rate by $\mu>0$ and let $\rho := \lambda/\mu$ be the traffic intensity. The queue length process, $\{Q(t)\}$, is a continuous-time Markov chain on the state space $\{0,1,\ldots,K\}$, with generator matrix $\Lambda$ and stationary distribution (row) vector $\vc\pi$ given by
$$
\Lambda=\left[\begin{array}{ccccc} 
-\lambda & \lambda &  &  &0\\ 
\mu &-(\mu+\lambda) & \lambda &  & \\ 
& \ddots & \ddots & \ddots & \\
& & \mu &-(\mu+\lambda) & \lambda\\
0 & & &\mu &-\mu
\end{array}\right],
\qquad 
\vc\pi=
\left\{\begin{array}{ll}
 \dfrac{1-\rho}{1-\rho^{K+1}}\left[1,\rho,\rho^ 2,\ldots,\rho^ K\right],
 & \textrm{for }\rho\neq 1,\\[1em]
  \dfrac{1}{K+1}\,\boldsymbol{1}',&\textrm{for }\rho= 1.\end{array}\right. 
$$
The departure process $\{D(t)\}$ is a MAP of which the phase-process is $\{Q(t)\}$, and the event intensity matrix $\Lambda_1$ is given by
$$
\Lambda_1
=
\left[\begin{array}{ccccc} 0 & 0 &  &  &0\\
\mu &0 & 0&  & \\ 
& \ddots & \ddots & \ddots & \\
& & \mu &0 &0\\
0 & & &\mu&0\end{array}\right].
$$
In brief, $\Lambda_1$ indicates which transitions of $\{Q(t)\}$ will count as increments of $\{D(t)\}$.
Denote $\Lambda^-:=(\vc 1\vc \pi-\Lambda)^{-1}$ and refer to this matrix as the \textit{fundamental matrix}. The fundamental matrix is probably the best known and most widely used generalised inverse for the generator of a Markov chain. Another generalised inverse which has a clear probabilistic interpretation and allows us to obtain explicit expressions for $\bar{v}$ and $\bar{b}$, is given by the Drazin inverse,
$$
\Lambda^{\sharp}:=\int_0^{\infty} (e^{\Lambda t}-\vc1\vc\pi) \,dt.
$$ 
The Drazin inverse can be interpreted as a measure of the total deviation from the limiting probabilities, which is why it is also refered to as the \textit{deviation matrix} in the literature, see for instance \cite{CoolenSc}. The fundamental matrix $\Lambda^-$ and the Drazin inverse $\Lambda^{\sharp}$ are related in the following way:
\begin{equation}\label{rel}
\Lambda^-=\Lambda^{\sharp}+\vc1\vc\pi. 
\end{equation}
For finite state space continuous-time Markov chains, such as the M/M/1/K queue, the fundamental matrix and the Drazin inverse always exist. The Drazin inverse satisfies the  properties
\begin{eqnarray}
\label{p1}
\Lambda^{\sharp}\vc1&=&\vc 0,\\\label{p2} \vc\pi \Lambda^{\sharp}&=&\vc 0, \\ \nonumber \Lambda^{\sharp} \Lambda &=&\Lambda \Lambda^{\sharp}\;=\;\vc 1\vc\pi-I,
\end{eqnarray}
as well as
$$
\Lambda^{\sharp}_{i,j}=\pi_{j}\,(m^e_{j}-m_{i,j}),
$$ 
where $m_{i,j}$ is the mean first entrance time from state $i$ to state $j$, and $m^e_{j}$ is the mean first entrance time to state $j$ from the stationary distribution, that is,
\[
m_{i,j}:= \Expect \big[ \inf\{t \,:\, Q(t) = j  \} ~|~ Q(0)=i \big],
\qquad
m^e_{j}: = \sum_{i=0}^K \pi_i m_{i,j},
\] see \cite{CoolenSc}.
Note that we take the indices of the matrices and vectors of size $K+1$ used here to run on range $\{0,\ldots,K\}$.

\subsection{M/M/1/K queue: Explicit formulas related to the Drazin inverse}

As will be evident below, we are particularly interested in the bottom left entry of the Drazin inverse and that of its square,
\begin{align*}
\overline{d}_v &
:=\Lambda^{\sharp}_{K,0} = \int_0^\infty \left(\Prob \big(Q(t)=0 ~|~Q(0)=K\big) - \pi_0\right) \, dt
=\pi_0\,(m^e_{0}-m_{K,0}),
\\
\overline{d}_b &
:=\left(\Lambda^{\sharp}\Lambda^{\sharp}\right)_{K,0}= \pi_0 \sum_{j=0}^K\pi_j\,(m^e_{j}-m_{K,j})\,(m^e_{0}-m_{j,0}).
\end{align*}

Finding explicit expressions for these quantities is tedious yet possible for the M/M/1/K queue:
\begin{lemma}\label{lemdraz}
For the M/M/1/K queue length continuous-time Markov chain, the bottom left elements of the Drazin inverse and its square are
\begin{align*}
\overline{d_v}&=\left\{\begin{array}{ll} 
-\mu^{-1}\,
\dfrac{
K(1-\rho)(1+\rho^{K+1})-2\rho(1-\rho^K)
}{(1-\rho)\,(1-\rho^{K+1})^2}, & \rho\neq 1,\\[1em]
-\mu^{-1}\,
\dfrac{K(K+2)}{6(K+1)} , & \rho=1.\end{array}\right.
\\[1em] 
\overline{d}_b&= 
\left\{\begin{array}{ll}
-\mu^{-2}\, \left\{
\dfrac{
\left[6(1+\rho^2)(1+K)^2-4\rho(1+6K+3K^2)\right] \rho^{K+1}+\rho^2(2+3K+K^2)(1+\rho^{2K})}
{2(1-\rho)^3(1-\rho^{K+1})^3}\right.
&\\[1em] \phantom{-\mu^{-2}\;}\left.-\dfrac{
2\rho(3+2K+K^2)(1+\rho^{2(K+1)})
-K(1+K)(1+\rho^{2(K+2)})}
{2(1-\rho)^3(1-\rho^{K+1})^3}\right\},
& \rho\neq 1,\\[1em]
-\mu^{-2}\,
  \dfrac{7K^4+28K^3+37K^2+18K}{360\,(K+1)},
    & \rho=1.\end{array}\right.
\end{align*}
\end{lemma}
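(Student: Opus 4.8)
The plan is to bypass the integral definition of $\Lambda^{\sharp}$ and instead reduce both quantities to the last entries of the solutions of two successive tridiagonal linear systems governed by $\Lambda$, using only the algebraic characterisation of the Drazin inverse. Write $x:=\Lambda^{\sharp}_{\cdot,0}$ for the zeroth column of $\Lambda^{\sharp}$, and let $e_0$ denote the zeroth unit column vector. Taking zeroth columns in the identity $\Lambda\Lambda^{\sharp}=\vc1\vc\pi-I$ gives $\Lambda x=\pi_0\vc1-e_0$, while \eqref{p2} gives $\vc\pi x=0$; since $\Lambda$ has one-dimensional kernel $\mathrm{span}(\vc1)$ and $\vc\pi(\pi_0\vc1-e_0)=0$ ensures solvability, these two conditions determine $x$ uniquely, and $\overline{d}_v=x_K$. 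Setting $z:=\Lambda^{\sharp}x=(\Lambda^{\sharp}\Lambda^{\sharp})_{\cdot,0}$ and using $\vc\pi x=0$ in $\Lambda\Lambda^{\sharp}=\vc1\vc\pi-I$ yields $\Lambda z=(\vc1\vc\pi-I)x=-x$, while \eqref{p2} again gives $\vc\pi z=0$, so $\overline{d}_b=z_K$ is obtained from a second solve with the same matrix but right-hand side $-x$.

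First I would solve for $x$. Writing $y_i:=x_{i+1}-x_i$, the tridiagonal structure collapses the interior rows to the first-order recursion $\lambda y_i-\mu y_{i-1}=\pi_0$ for $1\le i\le K-1$, with boundary value $y_0=(\pi_0-1)/\lambda$ from row $0$ and row $K$ acting as a consistency check. Its homogeneous part contributes the geometric factor $\rho^{-i}$ and its forcing term a constant, after which $x_i=x_0+\sum_{l<i}y_l$ is evaluated by elementary geometric sums. The free constant $x_0$ is fixed by imposing $\sum_i\pi_i x_i=0$ with $\pi_i=(1-\rho)\rho^i/(1-\rho^{K+1})$, and setting $i=K$ delivers $\overline{d}_v$ after simplification. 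Equivalently one may use the first-passage representation $\Lambda^{\sharp}_{i,0}=\pi_0(m^e_0-m_{i,0})$ from the statement together with the elementary birth-death formula $m_{k,k-1}=(1-\rho^{K-k+1})/(\mu(1-\rho))$ for the downward passage times; the two routes agree.

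Next I would solve $\Lambda z=-x$ by the same device. With $w_i:=z_{i+1}-z_i$ the interior rows give $\lambda w_i-\mu w_{i-1}=-x_i$, now forced by the already-computed $x_i$. Because $x_i$ is a combination of a term linear in $i$ and a geometric term, summing this recursion produces sums of the types $\sum i\rho^i$ and $\sum i^2\rho^i$, which is exactly what generates the heavier numerator appearing in the closed form for $\overline{d}_b$. Imposing $\sum_i\pi_i z_i=0$ to fix the additive constant and reading off $z_K$ then yields the stated expression.

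Finally, the case $\rho=1$ is handled most cleanly by re-running the same two solves with $\lambda=\mu$ and the uniform stationary vector $\pi_i=1/(K+1)$: the recursions for $y_i$ and $w_i$ become arithmetic, so $x_i$ is quadratic and $z_i$ quartic in $i$, immediately producing the polynomials $-\mu^{-1}K(K+2)/(6(K+1))$ and $-\mu^{-2}(7K^4+28K^3+37K^2+18K)/(360(K+1))$; alternatively one verifies these as the $\rho\to1$ limits of the general formulas. The main obstacle throughout is algebraic rather than conceptual: the nested solve for $z$ forces the evaluation and cancellation of second-order geometric sums against the $(1-\rho)$ and $(1-\rho^{K+1})$ denominators, and collapsing the result to the compact rational form stated for $\overline{d}_b$, while tracking the two normalisation constants correctly, is where the bulk of the care is required.
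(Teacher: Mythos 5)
Your proposal is correct, but it takes a genuinely different route from the paper. You characterise the zeroth column $x=\Lambda^{\sharp}\vc e_0$ purely algebraically, via $\Lambda x=\pi_0\vc 1-\vc e_0$ together with the normalisation $\vc\pi x=0$ from \eqref{p2}, and then obtain the column $z=(\Lambda^{\sharp})^2\vc e_0$ from the second solve $\Lambda z=-x$, $\vc\pi z=0$; both reductions are valid (solvability holds since the right-hand sides are annihilated by $\vc\pi$, and uniqueness follows because solutions differ by multiples of $\vc 1$, exactly one of which satisfies the normalisation), and the tridiagonal structure indeed collapses to the first-order difference recursions you state, with your downward-passage formula $m_{k,k-1}=(1-\rho^{K-k+1})/(\mu(1-\rho))$ checking out against the closed forms. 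The paper instead works through the probabilistic representation $\Lambda^{\sharp}_{i,j}=\pi_j(m^e_j-m_{i,j})$: it solves the first-step-analysis recurrences for \emph{all} mean hitting times $m_{i,j}$, averages to get $m^e_j$, and then evaluates $\overline{d}_b$ as the explicit weighted sum $\pi_0\sum_{j}\pi_j(m^e_j-m_{K,j})(m^e_0-m_{j,0})$, which requires the full $K$-th row and zeroth column of $\Lambda^{\sharp}$. Your approach buys economy — only one column of $\Lambda^{\sharp}$ and one column of its square are ever computed, and the square is handled by a second linear solve rather than a convolution-type sum — while the paper's approach buys the complete entrywise description of $\Lambda^{\sharp}$, which it reuses later (the asymptotic covariance corollary needs $\Lambda^{\sharp}_{Ki}$ for all $i$). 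The residual algebra (second-order geometric sums of the type $\sum i\rho^i$ and $\sum i^2\rho^i$ cancelling against the $(1-\rho)$ and $(1-\rho^{K+1})$ denominators) is of comparable weight in both treatments, and your handling of the $\rho=1$ case, either by rerunning the arithmetic recursions or by taking limits, matches the paper's.
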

\begin{proof}
For the M/M/1/K queue,
a standard application of ``first step analysis'' leads to the following recurrence equations for $m_{i,j}$:
$$m_{i,j}=\left\{\begin{array}{ll}
0,
& i=j,\\
\lambda^{-1}+m_{1,j},
& i=0,j \neq i,\\
\mu^{-1}+m_{K-1,j},
&
i=K, j \neq i,\\
(\lambda+\mu)^{-1}+\dfrac{\lambda}{\lambda + \mu}m_{i+1,j}+\dfrac{\mu}{\lambda + \mu}m_{i-1,j},
&
\mbox{otherwise}.
 \end{array}\right.
$$ 
When $\rho\neq 1$, the solution is
\begin{align*}
 m_{i,j}&
 =\left\{\begin{array}{ll} 
 \mu^{-1} \Big(
 \dfrac{\rho^{-j}-\rho^{-i}}{(1-\rho)^2} + \dfrac{i-j}{1-\rho}
 \Big),
  & 0\leq i\leq j,\\[1em]
\mu^{-1} \Big(
 \rho^{K+1}\dfrac{\rho^{-j}-\rho^{-i}}{(1-\rho)^2}+ \dfrac{i-j}{1-\rho}
 \Big),
  & j\leq i\leq K,\end{array}\right.
 \end{align*}
 and, when $\rho=1$, the solution is
 \begin{align*}
 m_{i,j}&
 =\left\{\begin{array}{ll}
 \mu^{-1}\,
  \dfrac{j\,(j+1)-i\,(i+1)}{2}, 
  & 0\leq i\leq j,
  \\[1em]
  \mu^{-1}\,
  \dfrac{(K-j)\,\big[(K-j)+1\big]-(K-i)\,\big[(K-i)+1\big]}{2}, 
 & j\leq i\leq K.\end{array}\right.
 \end{align*}
Averaging over $\vc \pi$ we get, 
\[
m^e_j=\left\{\begin{array}{ll}
\mu^{-1}\,
\dfrac{
\rho^{-j}-(1+2j)(1-\rho) - [1+2(K-j)](1-\rho)\rho^{K+1}-\rho^{2(K+1)-j}
}
{
(1-\rho)^2 (1-\rho^{K+1})
}
, & \rho \neq 1,\\[1em]
\mu^{-1}\,
\Big(
j^2-K\,j+\dfrac{K^2}{3}+\dfrac{K}{6}
\Big), & \rho =1.
\end{array}\right.
\]
Combining the above we get the desired results.
\end{proof}

\subsection{M/M/1/K queue: The stationary case}

When the queue length process $\{Q(t)\}$ is stationary, the MAP $\{D(t)\}$ is a (time) stationary point process (see \cite{bookAsmussen2003}). In this case, the asymptotic variance rate, $\bar{v}$, and the y-intercept, $\bar{b}_e$, are respectively given by
 \begin{eqnarray}
 \label{vv1}
 \bar{v}&=&\vc\pi \Lambda_1\vc1-2(\vc\pi \Lambda_1\vc1)^2+2\vc\pi \Lambda_1 \Lambda^- \Lambda_1 \vc1, \\
 \label{bb1}
 \bar{b}_e&=&2(\vc\pi \Lambda_1\vc1)^2-2\vc\pi \Lambda_1 \Lambda^-\Lambda^- \Lambda_1\vc 1,
\end{eqnarray}
see for instance \cite{NarayanaNeuts92}, \cite{bookAsmussen2003} or the summary in \cite{NazarathyWeiss0336}.
By substituting \eqref{rel} into (\ref{vv1},\ref{bb1}) we obtain a simpler expression for $\bar{v}$ and $\bar{b}_e$ in terms of the Drazin inverse:
\begin{eqnarray}
\label{v1}
\bar{v}&=&\vc\pi \Lambda_1\vc1+2\vc\pi \Lambda_1 \Lambda^{\sharp} \Lambda_1 \vc1 
=\lambda^* + 2\vc\pi \Lambda_1 \Lambda^{\sharp} \Lambda_1 \vc1,
\\
\label{b1}
 \bar{b}_e&=&-2\vc\pi \Lambda_1 \Lambda^{\sharp}\Lambda^{\sharp} \Lambda_1\vc 1,
 \end{eqnarray} 
 where
 \[
 \lambda^*= \lim_{t \to \infty} \frac{\Expect[D(t)]}{t} = \vc\pi \Lambda_1 \vc 1=\left\{\begin{array}{ll}
 \mu\rho  \dfrac{1-\rho^ K}{1-\rho^ {K+1}},
 & \textrm{for }\rho\neq 1,\\[1em]
 \mu\, \frac{K}{K+1}, &\textrm{for }\rho= 1.\end{array}\right. 
 \]
We can go further in the simplification of the expressions by using \eqref{p1} and \eqref{p2}. Let $\vc e_i$ denote the column vector of which the only nonzero entry is the entry corresponding to state $i$, which is equal to 1 ($0\leq i\leq K$). First, observe that $\vc\pi \Lambda_1$ and $\Lambda_1 \vc1$ take simple forms in our case:
 \[
\vc\pi \Lambda_1=\left\{\begin{array}{ll}
 \mu\rho\vc\pi-\rho^{K+1}\dfrac{1-\rho}{1-\rho^{K+1}} \mu \vc e_{K}^{'},
 & \textrm{for }\rho\neq 1,\\[1em]
 \mu\vc \pi-\dfrac{1}{K+1} \mu \vc e_{K}^{'},
 &\textrm{for }\rho= 1.\end{array}\right.
,
\qquad\qquad
\Lambda_1\vc 1 = \mu(\vc 1- \vc e_{0}).
\]
Then, since $\vc\pi$ and $\vc1$ are respectively the left and right eigenvectors of $\Lambda^{\sharp}$  corresponding to the eigenvalue 0, we obtain
\[
 \vc{\pi}\Lambda_1 \Lambda^{\sharp}\,\Lambda_1 \vc1
 =
 \left\{\begin{array}{ll}
 \label{eq1}
\mu^ 2\rho^{K+1}\dfrac{1-\rho}{1-\rho^{K+1}}\, \overline{d}_v,
 & \textrm{for }\rho\neq 1,\\[1em]
\mu^ 2\dfrac{1}{K+1}\,\overline{d}_v, &\textrm{for }\rho= 1.
\end{array}\right. 
 \]
We thus obtain
$$\bar{v}=
\left\{\begin{array}{ll}
\lambda^* + 2\mu^2\rho^{K+1}\dfrac{1-\rho}{1-\rho^{K+1}} \overline{d}_v
,
 & \rho\neq 1,\\[1em] 
 \lambda^* + \dfrac{2\mu^2}{K+1}\overline{d}_v,& \rho = 1,\end{array}\right.
 $$
and similarly,
$$\bar{b}_e=
\left\{\begin{array}{ll}
 -2 \mu^2 \, \rho ^{K+1}\dfrac{1-\rho}{1-\rho ^{K+1}} \overline{d}_b\,,
& \rho\neq 1,\\[1em] 
-2  \mu^2  \, \dfrac{1}{K+1} \overline{d}_b\,,
& \rho = 1.\end{array}\right. 
$$ 
\begin{figure}[t] 
\centering
\includegraphics[angle=0, width=12cm]{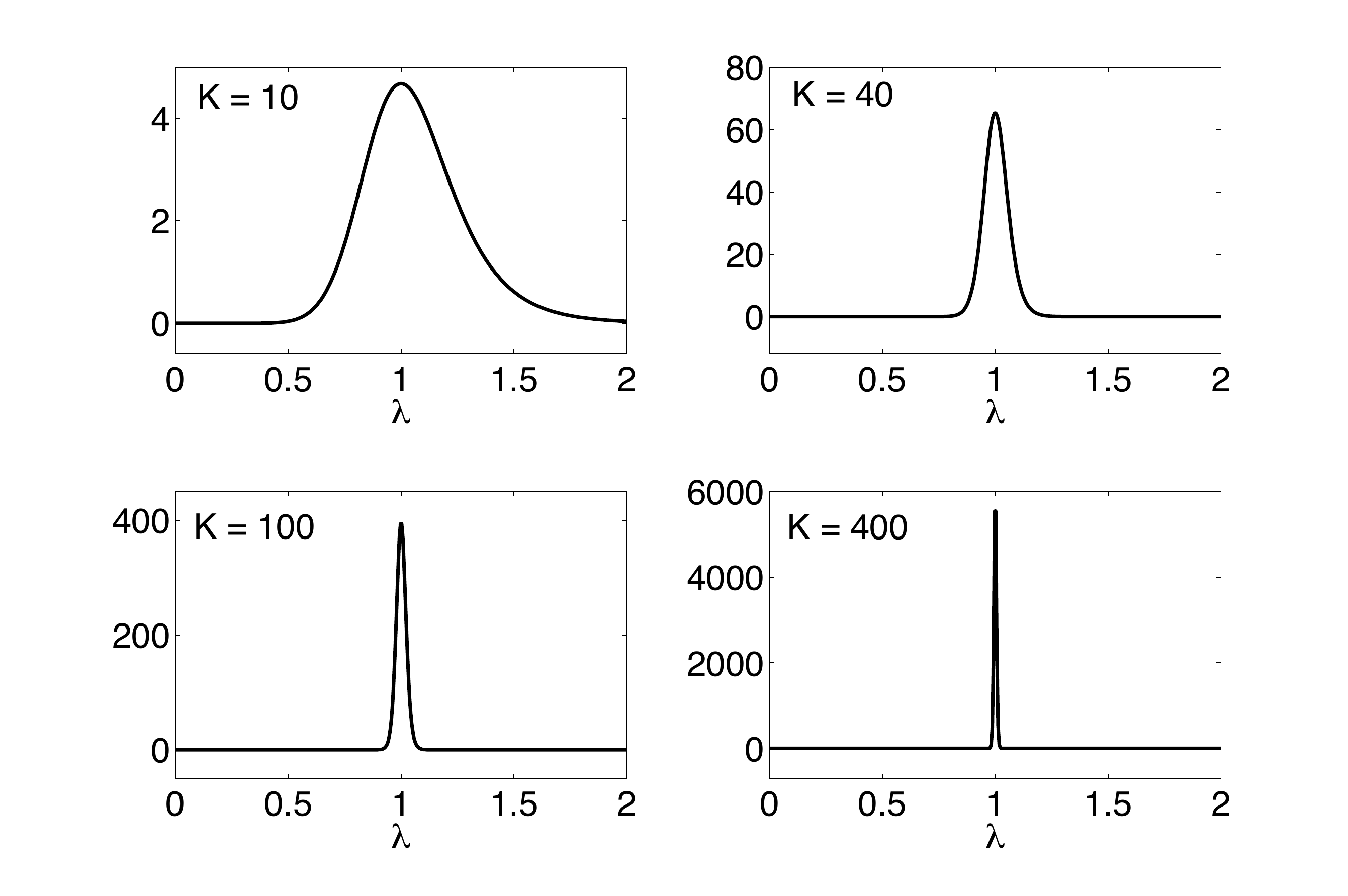} 
\caption{\label{yint}The y-intercept $\bar{b}_e$ as a function of $\lambda$ when $\mu=1$ and for $K=10,40,100,400$.}
 \end{figure}
Combining the above with the results of Lemma~\ref{lemdraz}, and manipulating the expressions, we obtain our main result for M/M/1/K queues:
\begin{proposition} \label{propvbe}
For the stationary M/M/1/K queue, $v(t)=\bar{v} t+\bar{b}_e+o(1)$ where the asymptotic variance rate and y-intercept are
\begin{align*}
\bar{v}&=\left\{\begin{array}{ll} 
\lambda \,
\dfrac{(1+\rho^{K+1})\big(1-(1+2K)\rho^K(1-\rho)-\rho^{2K+1}\big)}{(1-\rho^{K+1})^3}, 
& 
\rho\neq 1\\[1em]
\lambda \,\Big(\dfrac{2}{3}-
 \dfrac{3K+2}{3(K+1)^2}\Big), & \rho=1\end{array}\right.,\\
&\mbox{and}\\
\bar{b}_e&=\left\{\begin{array}{ll} 
\rho^ {K+1}\, \left\{
\dfrac{
\left(6(1+\rho^2)(1+K)^2-4\rho(1+6K+3K^2)\right) \rho^{K+1}+\rho^2(2+3K+K^2)(1+\rho^{2K})}
{(1-\rho )^2 \left(1-\rho ^{K+1}\right)^4}\right.
&\\[1em] \phantom{\rho^ {K+1}\;}\left.-\dfrac{
2\rho(3+2K+K^2)(1+\rho^{2(K+1)})
-K(1+K)(1+\rho^{2(K+2)})}
{(1-\rho )^2 \left(1-\rho ^{K+1}\right)^4}\right\},
&
\rho \neq 1,
\\[1em]
 \dfrac{7K^4+28K^3+37K^2+18K}{180\,(K+1)^2}, 
 & \rho=1.
 \end{array}\right.,
 \end{align*}
respectively.\hfill $\blacksquare$
\end{proposition}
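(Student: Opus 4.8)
The plan is purely computational: assemble the reduced formulas for $\bar v$ and $\bar b_e$ derived immediately above the statement and substitute the closed forms of $\overline{d}_v$ and $\overline{d}_b$ supplied by Lemma~\ref{lemdraz}. Recall that \eqref{v1}--\eqref{b1}, after inserting the simple forms of $\vc\pi\Lambda_1$ and $\Lambda_1\vc1$ together with the eigenvector identities \eqref{p1}--\eqref{p2}, collapse to
\[
\bar v = \lambda^* + 2\mu^2\rho^{K+1}\frac{1-\rho}{1-\rho^{K+1}}\,\overline{d}_v, \qquad \bar b_e = -2\mu^2\rho^{K+1}\frac{1-\rho}{1-\rho^{K+1}}\,\overline{d}_b,
\]
for $\rho\neq1$, with the analogous pair for $\rho=1$ obtained by replacing the scalar $\rho^{K+1}(1-\rho)/(1-\rho^{K+1})$ by $1/(K+1)$. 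Thus the whole proposition reduces to two algebraic simplifications in each of the two regimes.

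I would dispose of $\bar b_e$ first, since there the substitution is essentially free. Writing $\overline{d}_b = -\mu^{-2}N/\big(2(1-\rho)^3(1-\rho^{K+1})^3\big)$ with $N$ the bracketed numerator of Lemma~\ref{lemdraz}, the prefactor $-2\mu^2\rho^{K+1}(1-\rho)/(1-\rho^{K+1})$ exactly absorbs the constant $-\mu^{-2}$, the factor $\tfrac12$, and one power each of $(1-\rho)$ and $(1-\rho^{K+1})$. What survives is $\rho^{K+1}N/\big((1-\rho)^2(1-\rho^{K+1})^4\big)$, which is verbatim the claimed expression; no manipulation of $N$ is required. The $\rho=1$ case is likewise immediate: the factors of $2$ and $\mu$ cancel and the denominator $360(K+1)$ of $\overline{d}_b$ combines with $1/(K+1)$ to give $180(K+1)^2$.

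The genuine work is in $\bar v$ for $\rho\neq1$. Here I would place $\lambda^* = \mu\rho(1-\rho^K)/(1-\rho^{K+1})$ over the common denominator $(1-\rho^{K+1})^3$, combine it with the $\overline{d}_v$-term (whose denominator already carries $(1-\rho^{K+1})^2$ and an offsetting $(1-\rho)$ that cancels), and factor $\mu\rho=\lambda$ out of the result. The remaining identity to verify is the polynomial statement
\[
(1-\rho^K)(1-\rho^{K+1})^2 - 2\rho^K\big[K(1-\rho)(1+\rho^{K+1}) - 2\rho(1-\rho^K)\big] = (1+\rho^{K+1})\big(1-(1+2K)\rho^K(1-\rho)-\rho^{2K+1}\big),
\]
which one checks by expanding both sides as polynomials in $\rho$ (treating $\rho^K$, $\rho^{K+1}$, etc.\ as formal monomials). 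Confirming that the degree-$(3K+2)$ expansion on the left collapses to the compact factored form on the right is the main obstacle, and essentially the only place where care is needed. The $\rho=1$ formula for $\bar v$ then follows by the same direct substitution, yielding $\mu K(2K+1)/\big(3(K+1)^2\big)$, which is readily rewritten as $\lambda\big(\tfrac23 - (3K+2)/(3(K+1)^2)\big)$.

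Finally, I would record two consistency checks. Letting $\rho\to1$ in the $\rho\neq1$ formulas must reproduce the $\rho=1$ expressions, which is a useful guard against sign or indexing slips in the lengthy $\bar b_e$ numerator; and the $\rho=1$ value of $\bar b_e$ must agree with the known quantity $(7K^4+28K^3+37K^2+18K)/\big(180(K+1)^2\big)$ of Proposition~4.4 of \cite{NazarathyWeiss0336}, which it indeed does.
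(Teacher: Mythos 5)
Your proposal is correct and is exactly the paper's route: the paper proves Proposition~\ref{propvbe} by substituting the Drazin-inverse entries of Lemma~\ref{lemdraz} into the reduced expressions $\bar v=\lambda^*+2\mu^2\rho^{K+1}\frac{1-\rho}{1-\rho^{K+1}}\overline{d}_v$ and $\bar b_e=-2\mu^2\rho^{K+1}\frac{1-\rho}{1-\rho^{K+1}}\overline{d}_b$ (with $1/(K+1)$ replacing the scalar when $\rho=1$) and simplifying, which is precisely what you do. Your identification of the one nontrivial step --- the polynomial identity $(1-\rho^K)(1-\rho^{K+1})^2-2\rho^K\big[K(1-\rho)(1+\rho^{K+1})-2\rho(1-\rho^K)\big]=(1+\rho^{K+1})\big(1-(1+2K)\rho^K(1-\rho)-\rho^{2K+1}\big)$ --- is accurate, and that identity does hold.
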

\medskip
\noindent
Here are some observations:
\begin{itemize}
\item
With the exception of $\overline{b}_e$ for $\rho \neq 1$, all of the expressions in Proposition~\ref{propvbe} appeared previously in \cite{NazarathyWeiss0336}. Yet, while working on \cite{NazarathyWeiss0336}, the authors were not able to obtain $\overline{b}_e$ when $\rho \neq 1$, as is obtained now. 

\item We illustrate the y-intercept for different values of $K$ and $\lambda$ in Figure~\ref{yint}. It is straightforward to see that
\[
\lim_{K \to \infty} \overline{b}_e = 
\left\{\begin{array}{ll} 
0, & \rho \neq 1,\\
\infty, &  \rho=1,
\end{array}\right.
\]
and further, for $\rho=1$, $\overline{b}_e = O(K^2)$.
\item 
It is insightful to see the role of $\overline{d}_v$ and $\overline{d}_b$ in the above derivations.  In fact, the spikes in $\overline{v}$ and $\overline{b}$ that occur at $\rho \approx 1$ are attributed to $\overline{d}_v$ and $\overline{d}_b$.
\end{itemize}

\subsection{Some further useful results on MAPs} 
 
Our derivation of $\overline{v}$ and $\overline{b}_e$ above is based on \eqref{vv1} and \eqref{bb1} respectively, or alternatively on their Drazin inverse based forms, \eqref{v1} and \eqref{b1}. 
On route to calculating additional performance measures for the M/M/1/K queue, we first derive some further MAP results, which to the best of our knowledge have not appeared elsewhere. These results are of independent interest.

Consider an arbitrary MAP with an $n \times n$ irreducible generator matrix $\Lambda=
 \Lambda_0 + \Lambda_1$, where $\Lambda_1$ is the event intensity matrix and $\Lambda_0$ is assumed to be non-singular. Such a MAP corresponds to a two-dimensional Markov chain $\{(N(t),\varphi(t)),t\geq 0\}$, where $N(t)$ denotes the number of events in the interval $[0,t]$ and is also called the \textit{level} of the MAP at time $t$, and $\varphi(t)$ denotes the {\em phase} at time $t$, taking values in $\{1,\ldots,n\}$. We assume that $N(0)=0$ almost surely and denote by $\vc \theta$ the distribution of $\varphi(0)$. Further, we denote by $\vc \pi$ the stationary distribution corresponding to $\Lambda$, or equivalently to the phase process $\{\varphi(t)\}$.
 
If $\vc \theta = \vc \pi$ then $N(t)$ is a time-stationary point process. This implies that for any sequence of intervals $(t_1,s_1),\ldots,(t_\ell,s_\ell)$ and for any $\tau$,
\[
\big[N(s_1)-N(t_1),\ldots,N(s_\ell)-N(t_\ell)\big]
=^d 
\big[N(s_1+\tau)-N(t_1+\tau),\ldots,N(s_\ell+\tau)-N(t_\ell+\tau)\big],
\]
where the equality is in distribution (see \cite{bookAsmussen2003}, Chapter XI, Proposition 1.2).

Another interesting initial distribution is $\vc\alpha :=\vc \pi \Lambda_1/(\vc\pi\Lambda_1\vc1)$. This is the invariant distribution of a discrete time jump chain, $\{Y_k\}$ where $Y_k$ is the value of $\varphi(t)$ just after the $k$th arrival. The probability transition matrix of this Markov chain is $-\Lambda_0^{-1}\Lambda_1$. As shown in \cite{bookAsmussen2003}, Chapter XI, Proposition 1.4, setting $\vc \theta = \vc \alpha$ makes $\{N(t)\}$ an event-stationary point process. That is, if $T_k$ denotes the time interval between the $(k-1)st$ and the $k$th event in the MAP, then the joint distribution of $(T_k,T_{k+1},\ldots,T_{k+\ell})$ is the same as the joint distribution of $(T_{k'},T_{k'+1},\ldots,T_{k'+\ell})$ for all integer $k,k',\ell$.

Since $\vc \theta$ affects the point process in such a manner, it is natural to see its effect on $v(t)$ and related quantities. We now have the following:
\begin{proposition}\label{yintarbmap} For an arbitrary MAP with initial distribution $\vc\theta$,
$\Var\big(N(t)\big)=\bar{v} t+\bar{b}_\theta+o(1)$,
where the y-intercept is given by
\begin{equation}
\label{b0}\bar{b}_\theta=\bar{b}_e -\left(2\vc\pi \Lambda_1\vc1\vc\theta(\Lambda^{\sharp})^2\Lambda_1\vc 1-2\vc\theta \Lambda^{\sharp}\Lambda_1\Lambda^{\sharp}\Lambda_1\vc1+(\vc\theta \Lambda^{\sharp}\Lambda_1\vc1)^2\right),\end{equation} and $\bar{v}$ and $\bar{b}_e$ are respectively given by \eqref{v1} and \eqref{b1}.
\end{proposition}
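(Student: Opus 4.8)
The plan is to compute $\Var\big(N(t)\big)$ directly for a general initial phase distribution $\vc\theta$, extract the linear asymptote, and then compare the $y$-intercept with the stationary value $\bar{b}_e$ coming from \eqref{b1}. The natural tool is the matrix-analytic expression for the moments of the level $N(t)$ of a MAP. Writing $\vc m(t)$ for the (row) vector whose $i$th entry is $\Expect[N(t)\,\mathds{1}\{\varphi(t)=i\}]$ and the analogous second-moment vector, one has linear ODEs in $t$ (of the form $\vc m'(t)=\vc m(t)\Lambda+\vc\pi_t\Lambda_1\vc 1\cdot(\text{phase mass})$, with $\vc\pi_t=\vc\theta e^{\Lambda t}$). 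Solving these and post-multiplying by $\vc 1$ gives $\Expect[N(t)]$ and $\Expect[N(t)^2]$ explicitly in terms of $\vc\theta$, $e^{\Lambda t}$, and $\Lambda_1$.

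The key step is to replace the transient factors $e^{\Lambda t}$ by their limiting behaviour. Since $\Lambda$ is an irreducible generator, $e^{\Lambda t}\to\vc1\vc\pi$, and the correction is governed precisely by the Drazin inverse: from its defining integral, $\int_0^t (e^{\Lambda s}-\vc1\vc\pi)\,ds\to\Lambda^{\sharp}$ as $t\to\infty$. Substituting this into the solved moment expressions, every occurrence of $\int_0^t(e^{\Lambda s}-\vc1\vc\pi)\,ds$ contributes a $\Lambda^{\sharp}$ and every double integral contributes a $(\Lambda^{\sharp})^2=\Lambda^{\sharp}\Lambda^{\sharp}$, up to $o(1)$ terms; the remaining genuinely linear-in-$t$ pieces assemble into $\bar{v}\,t$ and, because $\bar{v}$ does not depend on initial conditions, must match \eqref{v1}. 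This is exactly the mechanism that turned \eqref{vv1}--\eqref{bb1} into the Drazin forms \eqref{v1}--\eqref{b1} in the stationary case, so I would reuse that computation with $\vc\pi$ replaced by $\vc\theta$ wherever the initial phase enters.

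The main obstacle is bookkeeping the $\vc\theta$-dependent constant terms. In the stationary case $\vc\theta=\vc\pi$, and the identities $\vc\pi\Lambda^{\sharp}=\vc 0$ and $\Lambda^{\sharp}\vc1=\vc0$ from \eqref{p1}--\eqref{p2} annihilate most of the transient contributions, leaving only $\bar{b}_e=-2\vc\pi\Lambda_1\Lambda^{\sharp}\Lambda^{\sharp}\Lambda_1\vc1$. For general $\vc\theta$ these cancellations fail, and the surviving pieces are precisely the three extra terms in \eqref{b0}: a term $2(\vc\pi\Lambda_1\vc1)\,\vc\theta(\Lambda^{\sharp})^2\Lambda_1\vc1$ coming from the interaction of the mean drift with the transient relaxation of the initial phase, a cross term $-2\vc\theta\Lambda^{\sharp}\Lambda_1\Lambda^{\sharp}\Lambda_1\vc1$, and a squared term $(\vc\theta\Lambda^{\sharp}\Lambda_1\vc1)^2$ arising from the $-(\Expect[N(t)])^2$ part of the variance. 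The cleanest route is to write $\bar{b}_\theta$ and $\bar{b}_e$ as limits of the same functional, subtract, and verify that the difference telescopes to the parenthesized expression in \eqref{b0}; then the remaining work is to confirm that each of these three groupings emerges with the stated sign and coefficient when the integrals are evaluated against $\Lambda^{\sharp}$. I expect no conceptual difficulty beyond this algebra, since existence of $\bar v$ and $\bar b_\theta$ is guaranteed by the finite, irreducible state space.
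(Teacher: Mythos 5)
Your proposal is correct and follows essentially the same route as the paper's proof: both decompose $\Var\big(N(t)\big)$ as in \eqref{varmap} through the factorial-moment matrices $M_1(t)$ and $M_2(t)$, identify their linear and quadratic asymptotes with coefficients expressed via the Drazin inverse, and then use the identities $\vc\pi \Lambda^{\sharp}=\vc 0$ and $\Lambda^{\sharp}\vc 1=\vc 0$ to reduce the constant terms to \eqref{b0}. The only difference is presentational: the paper imports the expansions \eqref{m1t}--\eqref{m2t} with their coefficient matrices from Narayana and Neuts \cite{NarayanaNeuts92} and rewrites them via \eqref{rel}, whereas you propose rederiving the same expansions directly from the moment ODEs and the transient Drazin integral, which is the same computation in substance.
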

\begin{proof} 
The variance curve of an arbitrary MAP with initial distribution $\vc\theta$ is given by
\begin{equation}\label{varmap}\Var\big(N(t)\big)=\vc\theta M_2(t)\vc1+\vc\theta M_1(t)\vc1-(\vc\theta M_1(t)\vc1)^2,\end{equation}where
$M_1(t)$ and $M_2(t)$ denote the matrices of the first two factorial moments of the number of events in a non-stationary MAP, that is,
$$\begin{array}{rcl} [M_1(t)]_{ij}&=&\Expect[N(t)\,\mathds{1}_{\{\varphi(t)=j\}}| \varphi(0)=i]\\[1em] [M_2(t)]_{ij}
&=&\Expect[N(t)(N(t)-1)\,\mathds{1}_{\{\varphi(t)=j\}}| \varphi(0)=i].
\end{array}$$
Narayana and Neuts \cite{NarayanaNeuts92} showed that $M_1(t)$ has a linear asymptote in that there exist constant matrices $A_0$ and $A_1$ such that
\begin{equation}\label{m1t}M_1(t)=A_0t+A_1+O(e^{-\eta t} t^{2r-1})\quad \mbox{as } t\rightarrow \infty,\end{equation}
where $-\eta$ is the real part of $\eta^*$, the non-zero eigenvalue of $\Lambda$ with maximum real part, and $r$ is the multiplicity of $\eta^*$. Similarly, $M_2(t)$ has a quadratic asymptote in that there exist constant matrices $B_0, B_1$ and $B_2$ such that
\begin{equation}\label{m2t}M_2(t)=B_0t^2+2B_1t+2B_2+O(e^{-\eta t} t^{3r-1})\quad \mbox{as } t\rightarrow \infty.\end{equation} The expressions for the coefficient matrices $A_0,\,A_1,\,B_0,\,B_1,$ and $B_2$ given in \cite{NarayanaNeuts92} are in terms of the fundamental matrix $\Lambda^-$. After rewriting them in terms of the deviation matrix via the relation \eqref{rel}, we obtain the following matrices:
\begin{eqnarray*} A_0 &=& (\vc\pi \Lambda_1 \vc 1)\,\vc 1\vc\pi
\\ A_1 &=& \Lambda^{\sharp} \Lambda_1\vc 1\vc\pi+\vc1\vc\pi \Lambda_1 \Lambda^{\sharp}
\\ B_0&=& (\vc\pi \Lambda_1 \vc 1)^2\,\vc 1\vc\pi
\\ B_1&=&  (\vc\pi \Lambda_1 \vc 1)\,\Lambda^{\sharp}\Lambda_1\vc 1\vc\pi+(\vc\pi \Lambda_1 \vc 1)\,\vc1\vc\pi\Lambda_1\Lambda^{\sharp}+ (\vc\pi\Lambda_1 \Lambda^{\sharp} \Lambda_1\vc1)\,\vc1\vc\pi
\\B_2&=&-\vc1\vc\pi (\vc\pi\Lambda_1(\Lambda^{\sharp} )^2\Lambda_1\vc1)+\Lambda^{\sharp} \Lambda_1\vc1\vc\pi\Lambda_1\Lambda^{\sharp} -(\vc\pi\Lambda_1\vc1)(\Lambda^{\sharp} )^2\Lambda_1\vc1\vc\pi\\&&-(\vc\pi\Lambda_1\vc1)\vc1\vc\pi\Lambda_1(\Lambda^{\sharp} )^2 +\vc1\vc\pi\Lambda_1\Lambda^{\sharp} \Lambda_1\Lambda^{\sharp}+\Lambda^{\sharp}\Lambda_1\Lambda^{\sharp}\Lambda_1\vc1\vc\pi.
\end{eqnarray*} By injecting \eqref{m1t} and \eqref{m2t} in terms of $\Lambda^{\sharp}$ into \eqref{varmap}, and after a few simplifications, we obtain that $\Var\big(N(t)\big)$ has a linear asymptote whose y-intercept is 
$$\bar{b}_\theta=-2\vc\pi \Lambda_1 \Lambda^{\sharp}\Lambda^{\sharp} \Lambda_1\vc 1 -2\vc\pi \Lambda_1\vc1\vc\theta(\Lambda^{\sharp})^2\Lambda_1\vc 1+2\vc\theta \Lambda^{\sharp}\Lambda_1\Lambda^{\sharp}\Lambda_1\vc1-(\vc\theta \Lambda^{\sharp}\Lambda_1\vc1)^2,$$
which proves the theorem.
\vspace{10pt}
\end{proof}
\medskip
\noindent

Here are some observations:
\begin{itemize}
\item
For $\vc\theta=\vc\pi$, the correction term $\bar{b}_\theta-\bar{b}_e$ vanishes as expected.
\item 
The correction term does not depend only on the variance of the initial distribution $\vc\theta$, as we show to be the case for the M/G/1 queue (see Proposition~\ref{prop:coupling}). Indeed, consider $n=3$, $\vc\theta_1=[1,0,0]$ and $\vc\theta_2=[0,1,0]$ which have the same variance equal to zero; however it is easy to find an example of a MAP for which the value of $\bar{b}_\theta-\bar{b}_e$ is different for the two initial distributions.  
\item In the specific case where $\Lambda_1\vc1=\beta\vc1$ (where $\beta$ is a constant),  the correction term $\bar{b}_\theta-\bar{b}_e$ vanishes for all initial distributions $\vc\theta$ because of the property \eqref{p1}.  This shows that $\vc\theta=\vc\pi$ is a sufficient but not necessary condition for having $\bar{b}_\theta=\bar{b}_e$.
\end{itemize}
 
A further performance measure of interest is the asymptotic covariance between the level and the phase of a MAP. As shown in the following proposition, the Drazin inverse also plays a role in that asymptotic quantity.

\begin{proposition}\label{covstat} 
Let $\{N(t)\}$ and $\{\varphi(t)\}$  be the level and the phase processes of a MAP with initial phase distribution $\vc \theta$. Then,
\[
\lim_{t\rightarrow\infty} \Cov\big(N(t),\varphi(t)\big)=
\sum_{i=1}^{n} i \big(\vc\pi \Lambda_1 \Lambda^{\sharp}\big)_i
-\left(\sum_{i=1}^{n} i\,\pi_i\right)\, \vc\theta \Lambda^{\sharp} \Lambda_1 \vc 1.
\]
Further, in the time-stationary case ($\vc \theta = \vc \pi$), the term with the second sum vanishes.
\end{proposition}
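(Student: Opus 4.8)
The plan is to express the covariance through the factorial-moment matrix $M_1(t)$, whose linear asymptote \eqref{m1t} is already in hand from the proof of Proposition~\ref{yintarbmap}, and through the transition kernel $e^{\Lambda t}$, whose convergence to $\vc 1\vc\pi$ is exponential. Writing the phase as the scalar random variable $\varphi(t)=\sum_{j=1}^{n} j\,\mathds{1}_{\{\varphi(t)=j\}}$ and letting $\vc j$ denote the column vector whose $i$th entry is $i$, I would first record the three ingredients
\[
\Expect\big[N(t)\varphi(t)\big]=\vc\theta M_1(t)\vc j,\qquad
\Expect\big[N(t)\big]=\vc\theta M_1(t)\vc 1,\qquad
\Expect\big[\varphi(t)\big]=\vc\theta\, e^{\Lambda t}\vc j,
\]
so that $\Cov\big(N(t),\varphi(t)\big)=\vc\theta M_1(t)\vc j-\big(\vc\theta M_1(t)\vc 1\big)\big(\vc\theta e^{\Lambda t}\vc j\big)$. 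The first two follow directly from the definition of $M_1(t)$, and the third from reading $e^{\Lambda t}$ as the phase transition kernel.

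Next I would substitute $M_1(t)=A_0 t+A_1+o(1)$ with $A_0=(\vc\pi\Lambda_1\vc 1)\vc 1\vc\pi$ and $A_1=\Lambda^{\sharp}\Lambda_1\vc 1\vc\pi+\vc 1\vc\pi\Lambda_1\Lambda^{\sharp}$, together with $e^{\Lambda t}=\vc 1\vc\pi+O(e^{-\eta t})$. Using $\vc\theta\vc 1=\vc\pi\vc 1=1$ and the Drazin identities \eqref{p1}--\eqref{p2}, the $\vc\theta$ prefactors collapse onto $\vc\pi$: in particular $\vc\theta A_0\vc j=(\vc\pi\Lambda_1\vc1)(\vc\pi\vc j)$, while $\vc\theta A_1\vc j$ splits into the piece $(\vc\theta\Lambda^{\sharp}\Lambda_1\vc 1)(\vc\pi\vc j)$ coming from the summand $\Lambda^{\sharp}\Lambda_1\vc 1\vc\pi$ and the piece $\vc\pi\Lambda_1\Lambda^{\sharp}\vc j$ coming from $\vc 1\vc\pi\Lambda_1\Lambda^{\sharp}$; likewise $\Expect[N(t)]=(\vc\pi\Lambda_1\vc 1)t+\vc\theta\Lambda^{\sharp}\Lambda_1\vc 1+o(1)$ (the second half of $A_1$ dies because $\Lambda^{\sharp}\vc 1=\vc 0$), and $\Expect[\varphi(t)]\to\vc\pi\vc j=\sum_{i} i\,\pi_i$. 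At this stage both $\Expect[N(t)\varphi(t)]$ and $\Expect[N(t)]\Expect[\varphi(t)]$ have the shape (linear in $t$)$\,+\,$(constant)$\,+\,o(1)$.

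The final step is to subtract and read off the constant. As a structural check, the two $O(t)$ terms coincide and must cancel, since a convergent covariance cannot grow linearly. The genuinely delicate point, and what I expect to be the main obstacle, is twofold. First, the cross term $\Expect[N(t)]\big(\Expect[\varphi(t)]-\sum_i i\pi_i\big)$ multiplies a factor growing like $t$ against a vanishing factor, so I must use the \emph{exponential} rate $e^{\Lambda t}-\vc 1\vc\pi=O(e^{-\eta t})$ to conclude $t\cdot O(e^{-\eta t})=o(1)$ and discard it. Second, the constant-order bookkeeping must be done with care: the product of the marginals contributes $-(\vc\theta\Lambda^{\sharp}\Lambda_1\vc 1)\big(\sum_i i\pi_i\big)$ to the covariance, while the very same quantity with the opposite sign surfaces inside $\vc\theta A_1\vc j$ through the summand $\Lambda^{\sharp}\Lambda_1\vc 1\vc\pi$; tracking whether each $\vc\theta\Lambda^{\sharp}\Lambda_1\vc 1$ contribution is paired with $\vc\pi\vc 1=1$ or with $\vc\pi\vc j=\sum_i i\pi_i$ is precisely where a miscount would introduce a spurious $\vc\theta$-dependent term. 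Collecting the surviving constant-order contributions then yields the limiting covariance, and the stationary simplification $\vc\theta=\vc\pi$ is immediate, since property \eqref{p2} ($\vc\pi\Lambda^{\sharp}=\vc 0$) annihilates every term carrying a leading $\vc\theta\Lambda^{\sharp}$, leaving $\sum_{i} i\,(\vc\pi\Lambda_1\Lambda^{\sharp})_i$.
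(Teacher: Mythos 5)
Your route is sound, and carried to completion it does \emph{not} reproduce the proposition's formula: the cancellation you flag at the end is genuine, and it is the paper's statement that miscounts. With $M_1(t)=A_0t+A_1+o(1)$, $A_0=(\vc\pi \Lambda_1\vc 1)\vc 1\vc\pi$ and $A_1=\Lambda^{\sharp}\Lambda_1\vc 1\vc\pi+\vc 1\vc\pi \Lambda_1\Lambda^{\sharp}$, your bookkeeping gives
\begin{align*}
\Expect[N(t)\varphi(t)] &= (\vc\pi \Lambda_1\vc 1)(\vc\pi\vc j)\,t+(\vc\theta \Lambda^{\sharp}\Lambda_1\vc 1)(\vc\pi\vc j)+\vc\pi \Lambda_1\Lambda^{\sharp}\vc j+o(1),\\
\Expect[N(t)]\,\Expect[\varphi(t)] &= (\vc\pi \Lambda_1\vc 1)(\vc\pi\vc j)\,t+(\vc\theta \Lambda^{\sharp}\Lambda_1\vc 1)(\vc\pi\vc j)+o(1),
\end{align*}
where the cross term is $t\cdot O(e^{-\eta t}t^{r-1})=o(1)$ exactly as you argue. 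The two $\vc\theta$-dependent constants cancel, so your method yields $\lim_{t\to\infty}\Cov\big(N(t),\varphi(t)\big)=\sum_{i=1}^n i\,(\vc\pi \Lambda_1\Lambda^{\sharp})_i$ for \emph{every} initial distribution $\vc\theta$; the second term in the proposition is spurious. A direct check confirms this: for $n=2$ (the M/M/1/1 departure MAP with $\lambda=\mu=1$), exact evaluation of $\vc\theta\int_0^t e^{\Lambda u}\Lambda_1 e^{\Lambda(t-u)}\,du$ gives $\Cov\big(N(t),\varphi(t)\big)\to-1/8$ for all $\vc\theta$ (consistent with the corollary's stationary value $-K(K+2)/24$ at $K=1$), whereas the stated formula would give $1/4$ for $\vc\theta=[1,0]$ and $-1/2$ for $\vc\theta=[0,1]$. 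A cheaper red flag: the covariance is invariant under relabelling the phases $\vc j\mapsto\vc j+c\vc 1$; the first term is invariant by property \eqref{p1}, but the proposition's second term scales with $\vc\pi\vc j+c$.

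The paper's own proof differs from yours and is where the error enters: it rewrites $\vc M^{\theta}(t)=\vc\theta e^{\Lambda t}\int_0^t e^{-\Lambda u}\Lambda_1 e^{\Lambda u}\,du$ and then replaces $\vc\theta e^{\Lambda t}$ by $\vc\pi$. But $e^{-\Lambda u}$ has exponentially \emph{growing} entries (the nonzero eigenvalues of $-\Lambda$ have positive real part), so the neglected piece $\vc\theta\big(e^{\Lambda t}-\vc 1\vc\pi\big)\int_0^t e^{-\Lambda u}\Lambda_1 e^{\Lambda u}\,du\;\vc j$ is not $o(1)$: it converges to $(\vc\pi\vc j)\,\vc\theta \Lambda^{\sharp}\Lambda_1\vc 1$, precisely the term that should cancel the constant retained from $\Expect[N(t)]\Expect[\varphi(t)]$. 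Your approach --- expanding both exponentials around $\vc 1\vc\pi$ \emph{inside} the convolution, so every remainder decays --- avoids this illegitimate interchange, and is the correct argument. Downstream, the M/M/1/K corollary is unaffected since it takes $\vc\theta=\vc\pi$, where the disputed term vanishes anyway; but the paper's subsequent remark that the non-stationary limit differs from the stationary one by $-\big(\sum_i i\pi_i\big)\vc\theta \Lambda^{\sharp}\Lambda_1\vc 1$ is incorrect --- the limiting covariance does not depend on the initial distribution at all.
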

\begin{proof} 
For a MAP with initial distribution $\vc\theta$, define $M^{\theta}_i(t)=\Expect[N(t)\,\mathds{1}_{\{\varphi(t)=i\}}]$, and $\vc M^{\theta}(t)=[M^{\theta}_1(t),\ldots,M^{\theta}_n(t)]$. From \cite[Chapter XI, Proposition 1.7]{bookAsmussen2003}, we have
$$\vc M^{\theta}(t)= \vc\theta\int_0^t e^{\Lambda u}\,\Lambda_1\,e^{\Lambda (t-u)}\, du.$$
Let us define the \textit{transient} Drazin inverse as
$$\Lambda^{\sharp}(t)=\int_0^t \left(e^{\Lambda u}-\vc 1\vc \pi\right)\,du=\int_0^t e^{\Lambda u}\,du- \vc 1\vc \pi\,t,	$$ so that $\Lambda^{\sharp}=\lim_{t\rightarrow\infty} \Lambda^{\sharp}(t)$. 
 With this, 
$$\Expect[N(t)]=\vc M^{\theta}(t)\vc 1=\vc\theta \int_0^te^{\Lambda u}du\,\Lambda_1\vc1=\vc \pi \Lambda_1\vc 1 t+ \vc\theta \Lambda^{\sharp}(t) \Lambda_1\vc 1.$$ Next, note that
$\Expect[N(t)\,\varphi(t)]=\sum_{i=1}^n i\,M^{\theta}_i(t)$. Therefore, since
$\Expect[\varphi(t)]=\sum_{i=1}^n i\,(\vc\theta e^{\Lambda t})_i$, we obtain 
\begin{eqnarray*}
\Cov\big(N(t),\varphi(t)\big)
&=&
\Expect[N(t)\,\varphi(t)]-\Expect[N(t)]\,\Expect[\varphi(t)]\\&=&
\sum_{i=1}^n i \left\{\left[\vc\theta e^{\Lambda t}\int_0^te^{-\Lambda u}\Lambda_1 e^{\Lambda u}du\right]_i-\left(\vc\pi \Lambda_1\vc1 t+\vc\theta \Lambda^{\sharp}(t) \Lambda_1\vc1\right)\,\left[\vc\theta e^{\Lambda t}\right]_i\right\}.
\end{eqnarray*} 
Finally, we take $t\rightarrow\infty$ in the last expression, and we use the fact that $\lim_{t\rightarrow\infty} \vc\theta e^{\Lambda t}=\vc\pi,$ $\vc\pi e^{-\Lambda u}=\vc\pi$, and $\int_0^te^{\Lambda u}du=\Lambda^{\sharp}(t)+\vc 1\vc\pi t$. After some algebraic simplifications, we obtain the statement of the proposition.
\end{proof}

 \begin{figure}[t] 
\centering
\includegraphics[angle=0, width=12cm]{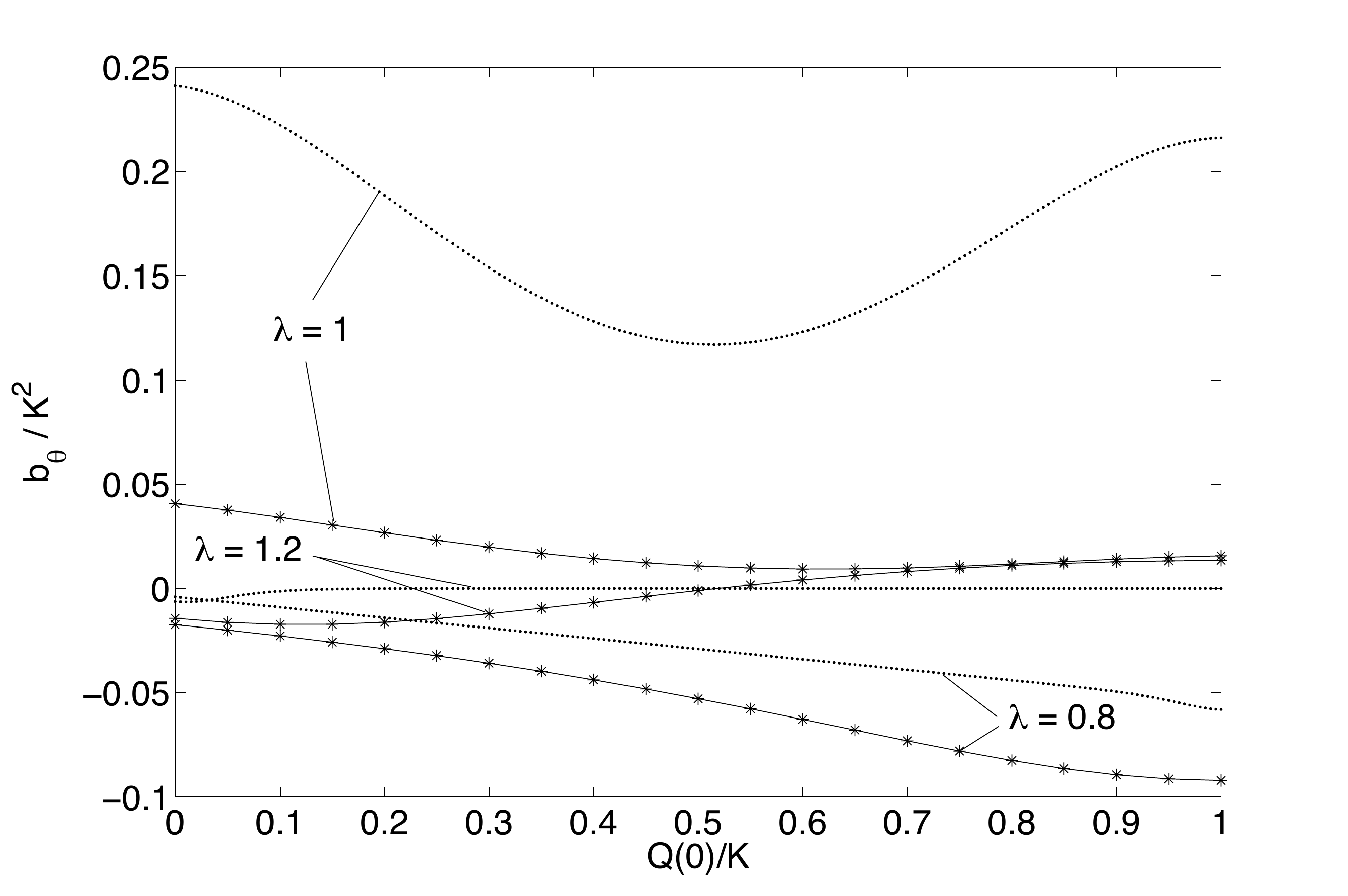} 
\caption{\label{fig1}The scaled y-intercept $\overline{b}_\theta/K^2$ when $\mu=1$ and $\vc\theta=\vc e_i'$ (that is, $Q(0)=i$ almost surely) as a function of $i/K\in\{0,1/K,2/K,\ldots,1\}$, for $\lambda=0.8$, $\lambda=1$, and $\lambda=1.2$, for $K=20$ (stars) and $K=200$ (dots).}
 \end{figure}

\subsection{M/M/1/K queue: Arbitrary initial conditions}

We now make use of Proposition \ref{yintarbmap} to investigate the y-intercept $\bar{b}_{\theta}$ of $v(t)$ for an arbitrary M/M/1/K queue where the distribution of $Q(0)$ is $\vc\theta$.

In Figure \ref{fig1}, we show the (scaled) value of $\overline{b}_\theta$ for the particular initial distributions $\vc\theta=\vc e_i'$, that is, for M/M/1/K queues which start with $i$ customers at time $t=0$ almost surely, for $0\leq i\leq K$. We observe that when $\rho<1$, $\overline{b}_\theta$ is a monotonically decreasing function of $i$, while when $\rho\geq 1$, $\overline{b}_\theta$ exhibits a minimum. 
We also observe that for $\rho>1$, when $K$ increases and $Q(0)/K\rightarrow 1,$ 
$\overline{b}_\theta\rightarrow 0$.

In Figure \ref{fig4}, we consider the behaviour of the y-intercept $\overline{b}_\theta$ in the event-stationary case, that is, when $\vc\theta=\vc \alpha$. The four graphs show the correction term $\overline{b}_\theta-\overline{b}_e$ as a function of $\rho$ (or, more precisely, as a function of $\lambda$ for a fixed value of $\mu$) for increasing values of $K$. We see that the curves have an interesting shape with two local minima centered around $\rho=1$. As $K$ increases, the dips become narrow and deep; the correction term converges to zero everywhere, except at $\rho=1$ where further computation has shown that it decreases approximately linearly. As a consequence, the effect of event-stationarity on the y-intercept becomes indistinguishable from the effect of time-stationarity as $K\rightarrow\infty$ for all values of $\rho$ except in the balanced case.

Note that 	the explicit expressions for $\overline{b}_\theta$ when $\vc\theta=\vc e_i'$ or $\vc\theta=\vc \alpha$ can also be obtained, but as they are quite cumbersome and do not bring more information, we do not present these here.
 
 \begin{figure}[t] 
\centering
\includegraphics[angle=0, width=12cm]{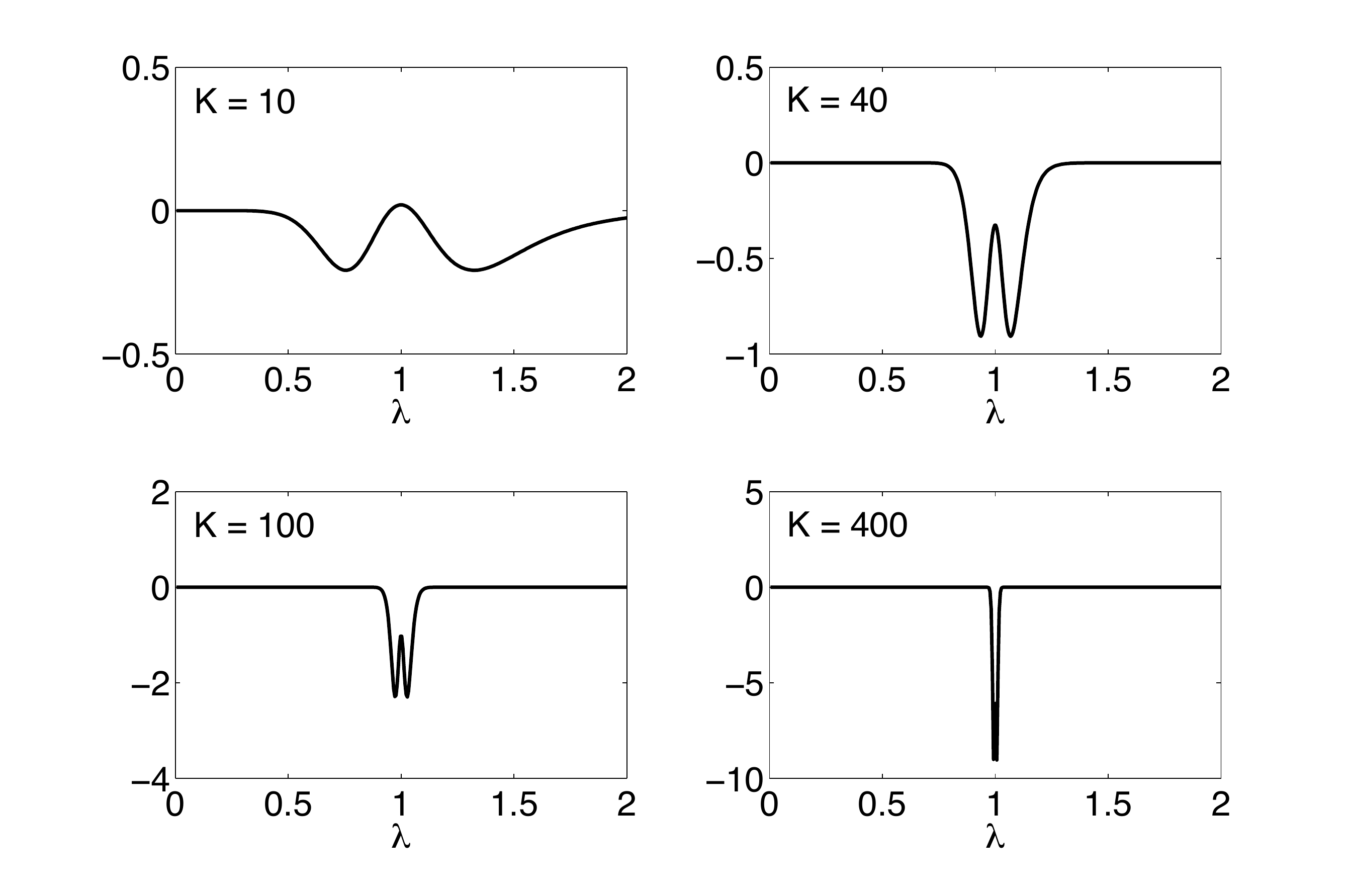} 
\caption{\label{fig4}The correction term $\overline{b}_\theta-\overline{b}_e$ when $\vc\theta=\vc \alpha$ as a function of $\lambda$ when $\mu=1$ and for $K=10,40,100,400$.}
 \end{figure}
 
\subsection{M/M/1/K queue: Asymptotic covariance}

Making use of Proposition \ref{covstat} and the particular structure of the vector $\vc\pi$ and the matrix $\Lambda_1$, in the stationary case, we can express the asymptotic covariance between the number of departures and the queue size explicitly.
\begin{figure}[t] 
\centering
\includegraphics[angle=0, width=12cm]{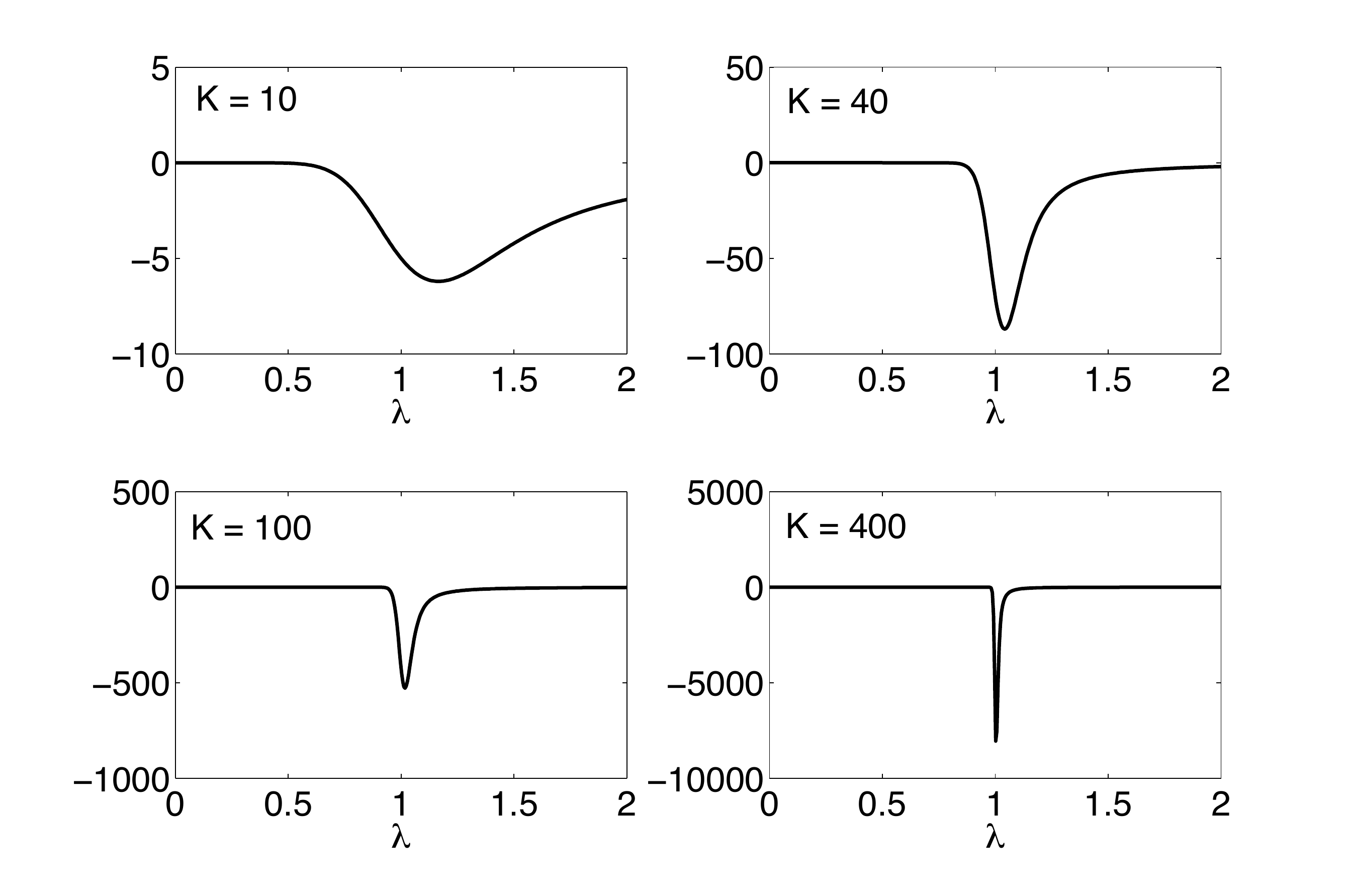} 
\caption{\label{fig6}The asymptotic covariance between $D(t)$ and $Q(t)$ as a function of $\lambda$ when $\mu=1$ and for $K=10,40,100,400.$}
\end{figure}
\begin{corollary} Consider the stationary M/M/1/K queue with output process $\{D(t)\}$ and queue level process $\{Q(t)\}$. Then, 
\begin{align*}
\lim_{t\rightarrow\infty} \Cov(D(t),Q(t))&=\left\{\begin{array}{ll} 
\rho^{K+1}\left\{\dfrac{ K^2 (\rho-1)^2 \left(1+3 \rho^{K+1}\right)-2 \rho \left(\rho^K-1\right) \left(-2+\rho+\rho^{K+2}\right)}{2 (\rho-1)^2 \left(\rho^{K+1}-1\right)^3}\right.
&\\[1em] \phantom{\rho^ {K+1}\;}\left.+\dfrac{K (\rho-1) \left(-1+3 \rho-7 \rho^{K+1}+5 \rho^{K+2}\right)}{2 (\rho-1)^2 \left(\rho^{K+1}-1\right)^3}\right\},
&
\rho \neq 1,
\\[1em]
 -K(K+2)/24, 
 & \rho=1.
 \end{array}\right.
 \end{align*}

\end{corollary}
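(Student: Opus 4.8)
The plan is to specialize Proposition~\ref{covstat} to the time-stationary case $\vc\theta=\vc\pi$, in which the second sum vanishes, so that
\[
\lim_{t\to\infty}\Cov\big(D(t),Q(t)\big)=\sum_{i=0}^{K} i\,\big(\vc\pi\Lambda_1\Lambda^{\sharp}\big)_i=\vc\pi\Lambda_1\Lambda^{\sharp}\vc w,
\]
where $\vc w=[0,1,\ldots,K]'$ is the vector of state labels (indices running over $\{0,\ldots,K\}$, matching $\varphi(t)=Q(t)$). I would first exploit the explicit form of $\vc\pi\Lambda_1$ recorded in the stationary subsection, namely $\vc\pi\Lambda_1=\mu\rho\,\vc\pi-\rho^{K+1}\tfrac{1-\rho}{1-\rho^{K+1}}\mu\,\vc e_K'$ (with its $\rho=1$ analogue). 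Since $\vc\pi\Lambda^{\sharp}=\vc 0$ by \eqref{p2}, the term proportional to $\vc\pi$ drops out and the covariance collapses to a multiple of the bottom row of the Drazin inverse,
\[
\lim_{t\to\infty}\Cov\big(D(t),Q(t)\big)=-\rho^{K+1}\frac{1-\rho}{1-\rho^{K+1}}\,\mu\sum_{i=0}^{K} i\,\Lambda^{\sharp}_{K,i}
\]
for $\rho\neq1$, and analogously for $\rho=1$.

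Next I would substitute the representation $\Lambda^{\sharp}_{K,i}=\pi_i\,(m^e_i-m_{K,i})$ stated in the text, together with the explicit expressions for $\pi_i$, $m^e_i$ and $m_{K,i}$ established in the proof of Lemma~\ref{lemdraz}. Because $\pi_i\propto\rho^i$ while $m^e_i$ and $m_{K,i}$ are built from the terms $\rho^{-i}$, $\rho^{2(K+1)-i}$, $i$, and constants, forming $i\,\pi_i(m^e_i-m_{K,i})$ and summing over $i$ reduces the entire computation to a handful of elementary finite sums, namely $\sum_{i=0}^{K} i$, $\sum_{i=0}^{K} i^2$, $\sum_{i=0}^{K} i\rho^i$ and $\sum_{i=0}^{K} i^2\rho^i$, each with a standard closed form. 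Collecting these and clearing denominators should assemble into the stated rational function of $\rho$ carrying the factor $\rho^{K+1}/\big[(\rho-1)^2(\rho^{K+1}-1)^3\big]$.

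A slightly cleaner alternative for the bottom-row sum is to note that $\sum_{i=0}^{K} i\,\Lambda^{\sharp}_{K,i}=(\Lambda^{\sharp}\vc w)_K$, and that $\vc y:=\Lambda^{\sharp}\vc w$ is the unique solution of the Poisson equation $\Lambda\vc y=(\vc\pi\vc w)\,\vc 1-\vc w$ subject to the normalization $\vc\pi\vc y=0$; this follows from $\Lambda\Lambda^{\sharp}=\vc 1\vc\pi-I$ together with \eqref{p2}. For the tridiagonal birth--death generator $\Lambda$ this system solves explicitly, and one needs only its last component $y_K$, which sidesteps handling the full matrix $\Lambda^{\sharp}$.

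The $\rho=1$ case can be obtained either directly from the $\rho=1$ formulas for $\pi_i$, $m^e_i$ and $m_{K,i}$, or by taking the limit $\rho\to1$ in the $\rho\neq1$ expression; the latter gives a convenient consistency check against the closed form $-K(K+2)/24$. The main obstacle is purely computational: marshalling the elementary sums and carrying out the algebraic simplification that collapses everything onto the stated expression, for which symbolic computation and careful bookkeeping of the geometric factors are advisable. No conceptual difficulty remains once the reductions of the first two paragraphs are in place.
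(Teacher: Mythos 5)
Your proposal is correct and takes essentially the same approach as the paper: specializing Proposition~\ref{covstat} to $\vc\theta=\vc\pi$, using $\vc\pi\Lambda^{\sharp}=\vc 0$ so that $(\vc\pi\Lambda_1\Lambda^{\sharp})_i$ reduces to $-C\,\Lambda^{\sharp}_{K,i}$ with $C=\mu\rho^{K+1}(1-\rho)/(1-\rho^{K+1})$ (and the $\rho=1$ analogue), and then evaluating the bottom row via $\Lambda^{\sharp}_{K,i}=\pi_i(m^e_i-m_{K,i})$ with the explicit expressions from the proof of Lemma~\ref{lemdraz}. Your Poisson-equation alternative for $\Lambda^{\sharp}\vc w$ is a minor computational variant, but the core argument coincides with the paper's.
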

\begin{proof}We use Proposition \ref{covstat} with $N(t)=D(t)$ and $\varphi(t)=Q(t)$, together with the fact that $(\vc \pi \Lambda_1 \Lambda^{\sharp})_i= -C\,\Lambda^{\sharp}_{Ki},$ where 
$$C=\left\{\begin{array}{ll} \mu\dfrac{\rho^{K+1}(1-\rho)}{1-\rho^{K+1}},& \textrm{for }\rho\neq 1,\\[1em] \dfrac{\mu}{K+1},&\textrm{for }\rho= 1,\end{array}\right.$$ and $\Lambda^{\sharp}_{Ki}=\pi_i\,(m^e_{i}-m_{K,i}).$ The entries $\Lambda^{\sharp}_{Ki}$ of the Drazin inverse are then computed explicitly for $0\leq i\leq K$ using the expressions for $\vc\pi$ and $m_{i,j}$ derived in the proof of Lemma \ref{lemdraz}.
\end{proof}

\medskip

Note that Proposition \ref{covstat} indicates that the difference between the stationary and the non-stationary cases is the correction term $-(\sum_i i\pi_i) \vc\theta \Lambda^{\sharp} \Lambda_1 \vc 1$, where $\sum_i i\pi_i=K/2$ for $\rho=1$, and for $\rho\neq 1 $,
 $$\sum_{i=0}^K i\pi_i=\frac{\rho \left(1-(1+K) \rho^K+K \rho^{1+K}\right)}{(1-\rho) \left(1-\rho^{1+K}\right)}.$$

In Figure \ref{fig6}, we illustrate the asymptotic covariance between $D(t)$ and $Q(t)$ in the stationary case, as a function of $\rho$ and for increasing values of $K$. We see that the asymptotic covariance curves exhibit a similar behaviour to (the negative of) those of the y-intercept in the stationary case (see Figure \ref{yint}), but in the present case the curves are more skewed with respect to $\rho=1$.

\section{The M/G/1 Queue}
\label{sec:mainResMG1}
We now consider the departure process of the M/G/1 queue. In this case, the departure process $\{D(t)\}$ is generally not a MAP, and the analysis is more complicated. Nevertheless we are able to obtain some partial results about the linear asymptote of  $v(t)$.  Our approach is to first assume the existence of a linear asymptote and then to find an elegant formula for the y-intercept under this assumption, generalizing the y-intercept of the M/M/1 queue in \eqref{eq:mm1Var} for the stable case. Further we conjecture that our assumption holds when the third moment of the service time is finite.

Denote the arrival rate by $\lambda$, the service time distribution by $G(\cdot)$ and its $k$'th moment by $g_k$. In this case, $\mu := g_1^{-1}$, and we assume that $\rho := \lambda/\mu <1$.  The squared coefficient of variation and skewness coefficient are respectively given by
\[
c^2=\frac{g_2}{g_1^2}-1, \qquad \gamma = 	\frac{2 g_1^3 - 3 g_1 g_2 + g_3}{(g_2-g_1^2)^{3/2}}.
\]

\begin{figure}[t] 
\centering
\includegraphics[angle=0, width=12cm]{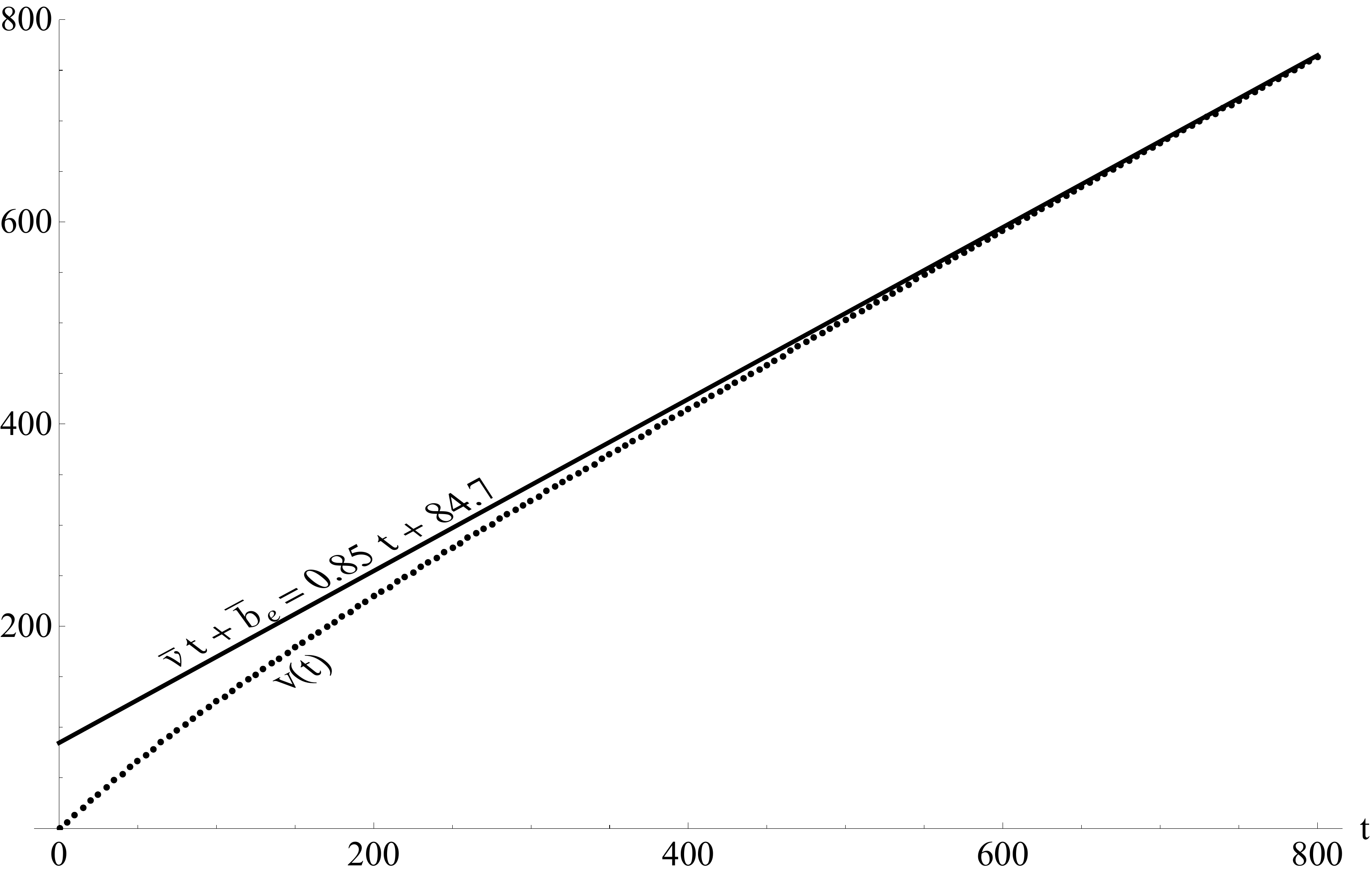} 
\caption{\label{fig:MG1sim}The variance curve and its linear asymptote for a stationary M/G/1 queue with $\lambda = 0.85$, $\mu=1$ and G following a log-normal distribution with $c^2=2$.}
\end{figure}

Consider the numerical example in Figure~\ref{fig:MG1sim}. This is a stationary M/G/1 queue with $\lambda=0.85$ and $\mu=1$ ($\rho=0.85$). The service distribution is taken to be a log-normal distribution with $c^2=2$. This implies $\gamma = 10/\sqrt{2}$. The figure plots the variance curve, $v(t)$, next to the linear asymptote\footnote{We simulated $10^6$ realizations of the queueing process, recording and estimating the variance of $D(t)$ over the grid $t=0,5,10,\ldots,800$. Prior to time $t=0$ we simulated each realization for $3\times 10^4$ units so as to begin in approximate steady-state. The simulation is coded in C to allow for efficient computation. 
During the simulation run, roughly $26 \times 10^{9}$ jobs were processed in the simulated M/G/1 queue.}. It is visually evident that for non-small $t$,
\[
v(t) \approx \overline{v} t + \overline{b}_e.
\]
For the rest of this paper, we shall assume that such a linear asymptote exists. This is stated in the assumption below.
\begin{ass}
\label{sweetAss1}
There exist $\overline{v}$ and $\overline{b}_\theta$ such that,
\begin{equation}\label{ass1}
v(t) = \overline{v} t + \overline{b}_\theta + o(1).
\end{equation}
\end{ass}

The $\overline{b}_e$ term in Figure~\ref{fig:MG1sim} was calculated from the formula in Proposition~\ref{prop:statAsymp} below and is a function of $\mu$, $c^2$ and $\gamma$. As is attested by the figure and by further extensive numerical experiments, we believe that such a term exists for all M/G/1 queues with $\rho \neq 1$ in which the service time distribution has a finite third moment, yet we have not been able to prove this. 
%
%
\begin{conj}
\label{conj1}
Consider an M/G/1 queue with finite third moment operating under any work-conserving non-pre-emptive policy, and assume $\lambda \neq \mu$. Assume that the variance of the queue length at time $0$ is finite, then there exists a finite $\overline{b}_\theta$ such that \eqref{ass1} holds,
with
\[
\overline{v} = 
\left\{\begin{array}{ll}
 \lambda,  &\lambda < \mu, \\
\mu c^2,  & \lambda > \mu.
\end{array}\right.
\]
\end{conj}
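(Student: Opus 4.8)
The plan is to treat the two regimes $\lambda<\mu$ and $\lambda>\mu$ separately, since the departure process has a completely different qualitative structure in each, and in both to isolate a leading linear term plus a bounded correction. Throughout I would exploit the pathwise customer-conservation identity
\[
D(t)=A(t)+Q(0)-Q(t),
\]
where $A(t)$ is the Poisson$(\lambda)$ arrival-counting process and $Q(t)$ is the number in system; this holds for every work-conserving non-pre-emptive discipline, and since the cumulative idle time (hence the cumulative busy time $t-I(t)$) is pathwise invariant across such disciplines, the asymptotic rate $\overline{v}$ will be policy-independent, as the statement requires. I would first establish the stationary ($\vc\theta=\vc\pi$) case, obtaining $\overline{b}_e$, and then pass to an arbitrary initial law of finite variance via the coupling argument of Proposition~\ref{prop:coupling}; the hypothesis that $\Var(Q(0))<\infty$ is exactly what that coupling needs.

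For the stable case I would expand
\[
\Var\big(D(t)\big)=\lambda t-2\,\Cov\big(A(t),Q(t)\big)+\Var\big(Q(t)-Q(0)\big),
\]
using $\Var(A(t))=\lambda t$ and $\Cov(A(t),Q(0))=0$, the latter because the arrivals in $(0,t]$ are independent of the initial state. A finite third moment $g_3$ forces, through the Pollaczek--Khinchine machinery, the stationary number-in-system $Q$ to have finite variance, so $\Var(Q(t)-Q(0))\to 2\Var(Q)-2\lim_{t}\Cov(Q(0),Q(t))$, while $\Cov(A(t),Q(t))$ converges to a constant. This already yields $\overline{v}=\lambda$ and shows the correction is $O(1)$. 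To pin down the constant and express it through $g_1,g_2,g_3$, I would switch to the covariance density $\psi(\cdot)$ of the stationary output point process, for which $\Var(D(t))=\lambda t+2\int_0^t(t-u)\,\psi(u)\,du$, so that $\overline{b}_e=-2\int_0^\infty u\,\psi(u)\,du$; this is precisely where Daley's transform enters, since it supplies the Laplace transform of $\psi$ in closed form, and extracting its first moment should reproduce the explicit formula of Proposition~\ref{prop:statAsymp}.

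For the unstable case the server empties only finitely often almost surely, so there is an a.s.\ finite last exit time $\tau$ from the empty state and an a.s.\ finite total idle time $I_\infty$, with $I(t)=I_\infty$ for all $t\geq\tau$. In the busy-time clock the completions are back-to-back, so for $t\geq\tau$ one has exactly $D(t)=N_R(t-I_\infty)$, where $N_R$ is the ordinary renewal process of the i.i.d.\ service times. I would then invoke the renewal asymptote \eqref{eq:renewalAsymptote}, which gives $\Var(N_R(s))=\mu c^2 s+\text{const}+o(1)$, and absorb the random shift by writing $N_R(t-I_\infty)=N_R(t)-\big(N_R(t)-N_R(t-I_\infty)\big)$; the subtracted increment counts renewals in a window of length $I_\infty$, whose mean and variance are controlled by the finite moments of $I_\infty$ (themselves finite because the number of finite busy cycles is geometrically dominated and each idle period is exponential). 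This leaves $\overline{v}=\mu c^2$ with an $O(1)$ correction, and a little further bookkeeping converts the correction into a genuine finite limit.

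The hard part, and the reason the statement is a conjecture rather than a theorem, is the uniform control of the $o(1)$ remainder in the stable case. The candidate constant $-2\int_0^\infty u\,\psi(u)\,du$ is finite, and the representation reduces to a clean $\lambda t+\overline{b}_e+o(1)$, only if $\int_0^\infty u\,|\psi(u)|\,du<\infty$, equivalently if the output covariance density (or, through the conservation identity, the queue-length autocovariance $\Cov(Q(0),Q(t))$) decays fast enough to be integrated against $u$. Under only a finite third moment these correlations need not decay exponentially, and certifying this weighted integrability for every service law with $g_3<\infty$ --- rather than for phase-type or otherwise exponentially-mixing service --- is the delicate analytic obstacle I would expect to dominate the effort; Daley's transform hands over the candidate value cleanly, but justifying the interchange of limit and integral that turns it into a rigorous $o(1)$ statement is exactly what is missing.
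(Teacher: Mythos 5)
You should first be clear about what you were asked to prove: this statement is labelled a \emph{conjecture} in the paper, and the paper contains no proof of it. What the paper offers is exactly the two heuristics your plan elaborates: for $\lambda<\mu$, the conservation identity $D(t)=A(t)+Q(0)-Q(t)$, dividing by $t$ and arguing that all limits except $\Var(A(t))/t$ vanish (``whenever the limits exist''); for $\lambda>\mu$, the observation that after an a.s.\ finite time the server never idles, so $\{D(t)\}$ is eventually a time-shifted renewal process of the service times, of which the authors explicitly say that ``this intuitive argument cannot be made rigorous easily.'' Your additional machinery also mirrors the paper's conditional results: the transform route to the constant is Proposition~\ref{prop:statAsymp}, and the reduction from arbitrary $\vc\theta$ to the stationary case is Proposition~\ref{prop:coupling}. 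As a faithful reconstruction of the paper's supporting evidence, and as a correct identification of where the difficulty sits, your proposal is good; as a proof it is not one, and you say so yourself.

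Beyond the gap you name (weighted integrability $\int_0^\infty u\,|\psi(u)|\,du<\infty$ of the output covariance density under only $g_3<\infty$), let me make the circularities concrete, because they would defeat any attempt to tighten your sketch as written. (a) You treat the convergence of $\Cov(A(t),Q(t))$ and the vanishing of $\Cov(Q(0),Q(t))$ as available facts; in the paper these are Assumptions~\ref{sweetAss2} and \ref{sweetAss3}, verified only for FCFS via personal communication, and Proposition~\ref{thm:covConstant} derives the covariance limit \emph{from} Assumption~\ref{sweetAss1} --- i.e.\ from the conjecture itself --- so it cannot serve as input. (b) The step $\lim_{s\to 0}b^*(s)=\overline{b}_e$ in Proposition~\ref{prop:statAsymp} is Abelian in direction: it evaluates the limit of $\tilde{b}(t)$ \emph{assuming} \eqref{ass1} holds. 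Reversing it requires a genuine Tauberian condition on $\tilde{b}$ (slow oscillation, ultimate monotonicity, or your integrability condition), and verifying any such condition for every $G$ with $g_3<\infty$ is precisely the unproved content; Daley's transform hands you only the candidate value. (c) Proposition~\ref{prop:coupling} is also conditional on Assumption~\ref{sweetAss1} and is confined to $\rho<1$; for $\rho>1$ the low-priority coupling breaks down (there are only finitely many idle periods, so the two trajectories need never merge), and indeed the paper's conclusion in Section~\ref{sec:conc} lists the existence of $\overline{b}_\theta$ for $\rho>1$ as an open problem. (d) In the unstable case, $I_\infty$ is \emph{not} independent of the renewal process $N_R$ (both are functionals of the same arrival and service sequences), so $\Var\bigl(N_R(t)-N_R(t-I_\infty)\bigr)$ and its cross-covariance with $N_R(t)$ are not controlled by moments of $I_\infty$ alone; your ``little further bookkeeping'' is exactly the step the authors could not make rigorous, and applying \eqref{eq:renewalAsymptote} at the random argument $t-I_\infty$ requires uniformity that the pointwise asymptote does not provide. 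In short: your plan is an accurate map of the evidence and of the obstruction, but neither it nor the paper constitutes a proof, and the essential missing ingredient --- unconditional existence of the linear asymptote, equivalently quantitative decay of the relevant correlations under a bare third-moment hypothesis --- remains open.
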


To get insight into the asymptotic variance rate, $\overline{v}$, in Conjecture~\ref{conj1}, consider first the case where $\lambda > \mu$. In this case, after some time $\tau$ which is almost surely finite, the server never stops operating and thus for $t > \tau$, $\{D(t)\}$ is effectively a renewal process with asymptotic variance rate $\mu c^2$ (see equation \eqref{eq:renewalAsympVar}). To the best of our knowledge, this intuitive argument cannot be made rigorous easily.

For the case $\lambda < \mu$ consider the relation,
\[
D(t) =A(t)+ Q(0)  - Q(t),
\]
where $D(t)$ is the number of service completions during $[0,t]$, $\{A(t)\}$ is the Poisson arrival process, counting arrivals during $[0,t]$, and $Q(t)$ is the number of customers in the system at time $t$. Taking the variance of both sides of the last equation, observing that $A(t)$ is independent of $Q(0)$, dividing by $t$, and letting $t \to \infty$, we get
\begin{align*}
\lim_{t \to \infty} \frac{v(t)}{t} &= 
\lim_{t \to \infty} \frac{\Var\big(A(t)\big)}{t} +
\lim_{t \to \infty} \frac{\Var\big(Q(0)\big)}{t} +
\lim_{t \to \infty} \frac{\Var\big(Q(t)\big)}{t}\\
&- 2 \lim_{t \to \infty} \frac{\Cov\big(Q(0),Q(t)\big) }{t} -2  \lim_{t \to \infty} \frac{\Cov\big(A(t),Q(t)\big) }{t},
\end{align*}
whenever the limits exist. Now the first limit on the right hand side equals $\lambda$, the second limit vanishes by assumption, the third limit should vanish since the stationary variance is finite (due to a finite third moment -- see equation \eqref{eq:StatPi}), the fourth limit should vanish since in fact $\Cov(Q(0), Q(t))$ vanishes (this is not trivial to establish in general, yet was communicated to us for the FCFS case through personal communication with Brian Fralix), and finally, for the fifth limit observe that
\[
\big| \Cov\big( A(t), Q(t) \big) \big| \le \sqrt{ \lambda t \Var\big(Q(t)\big)} = O(\sqrt{t}),
\]
and thus the limit vanishes. This implies that the departure asymptotic variance equals the arrival asymptotic variance. To get insight into our belief of the importance of $g_3<\infty$ for the existence of $\overline{b}_\theta$ (at least for the case $\lambda < \mu$), see the proof of Proposition~\ref{prop:statAsymp} below. 

Our focus for the rest of this section is on the stable case ($\lambda < \mu$) in which we are able to find explicit expressions for $\overline{b}_e$ and $\overline{b}_\theta$ (including $\overline{b}_0$).

\subsection{M/G/1 queue: The stationary case}
In \cite{daley1975fso} (see also \cite{Daley0510}) Daley found the Laplace-Stieltjes transform (LST) of the variance curve for the stationary case.  After some minor rearrangement, Daley's formula may be written as
\begin{equation}
\label{eq:DaelyFormula}
v^*(s) := \int_0^\infty e^{-st} dv(t) = 
\frac{\lambda}{s} +  b^*(s),
\end{equation}
where,
\begin{equation}
\label{eq:bStarS}
b^*(s)=
\frac{2 \lambda}{s}\left( \frac{G^*(s)}{1-G^*(s)}\left(1 - \frac{s \Pi(\Gamma(s))}{s+\lambda(1-\Gamma(s))}\right) - \frac{\lambda}{s}\right),
\end{equation}
and the LST exists for $\Re(s)>0$. Here, $G^*(\cdot)$ is the Laplace-Stieltjes transform (LST) of $G(\cdot)$, $\Pi(\cdot)$ is the probability generating function of the stationary number of customers in the system, with
\begin{equation}
\label{eq:KPformula}
\Pi(z) = (1-\rho)\frac{(1-z) G^*\big(\lambda(1-z)\big)}
{G^*\big(\lambda(1-z)\big) -z},
\end{equation}
for $\Re(z) \le 1$. Further, $\Gamma(s)$ is the LST of the busy period at $s$, with $\Re(s)>0$. It is obtained as the minimal non-negative solution of
\begin{equation}
\label{eq:BpfEqn}
\Gamma(s) = G^*\Big( s + \lambda \big(1-\Gamma(s)\big)\Big).
\end{equation}
For relevant standard queueing background see for example \cite{prabhu1998ssp}. 

We now have the following:
\begin{proposition}
\label{prop:statAsymp}
Consider the stationary M/G/1 queue having $g_3 < \infty$. If Assumption~\ref{sweetAss1} holds, then the y-intercept in \eqref{ass1} is given by
\[
\overline{b}_e = 
L_e
\frac{\rho}{(1-\rho)^2}, \,\,\, \mbox{with} \,\,\,
L_e=\frac{
(3 c^4 - 4 \gamma  c^3 + 6 c^2 -1 )\rho^3 +
(4 \gamma c^3  - 12 c^2 + 4)\rho^2 +
(6 c^2 -6)\rho
}{6}.
\]
\end{proposition}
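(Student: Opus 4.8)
The plan is to read off $\overline b_e$ from the small-$s$ behaviour of Daley's transform $v^*(s)$ in \eqref{eq:DaelyFormula}. Since $D(0)=0$ we have $v(0)=0$, so integrating by parts gives $v^*(s)=\int_0^\infty e^{-st}\,dv(t)=s\,\widetilde v(s)$, where $\widetilde v(s)=\int_0^\infty e^{-st}v(t)\,dt$ is the ordinary Laplace transform. Under Assumption~\ref{sweetAss1}, write $v(t)=\overline v\,t+\overline b_e+r(t)$ with $r(t)=o(1)$. Transforming termwise yields $v^*(s)=\overline v/s+\overline b_e+s\,\widetilde r(s)$, and the final-value theorem gives $\lim_{s\to0^+}s\,\widetilde r(s)=\lim_{t\to\infty}r(t)=0$. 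Hence $\overline v=\lim_{s\to0^+}s\,v^*(s)$ and
\[
\overline b_e=\lim_{s\to 0^+}\bigl(v^*(s)-\overline v/s\bigr).
\]
Comparing with \eqref{eq:DaelyFormula} identifies $\overline v=\lambda$ (consistent with the discussion following Conjecture~\ref{conj1}) and reduces the problem to showing that $b^*(s)$ has a finite limit and computing it:
\[
\overline b_e=\lim_{s\to0^+}b^*(s).
\]

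The substance of the proof is then the evaluation of this limit from \eqref{eq:bStarS} by Taylor expansion about $s=0$. First I would record $G^*(s)=1-g_1 s+\tfrac{g_2}{2}s^2-\tfrac{g_3}{6}s^3+O(s^4)$, which is exactly where the hypothesis $g_3<\infty$ enters. Next I would expand the busy-period transform $\Gamma(s)=1-b_1 s+b_2 s^2-b_3 s^3+O(s^4)$, obtaining the coefficients $b_1,b_2,b_3$ by repeatedly differentiating the fixed-point equation \eqref{eq:BpfEqn}, $\Gamma(s)=G^*\bigl(s+\lambda(1-\Gamma(s))\bigr)$, at $s=0$ (using $\Gamma(0)=1$ since $\rho<1$); here $b_1=g_1/(1-\rho)$. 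Finally I would expand $\Pi(\Gamma(s))$ by composing the expansion of $\Gamma$ with the Taylor expansion of the Pollaczek--Khinchine generating function \eqref{eq:KPformula} about $z=1$, whose successive derivatives at $1$ are the factorial moments of the stationary queue length and are themselves finite precisely because $g_3<\infty$.

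With these in hand I would substitute into \eqref{eq:bStarS}. The factor $G^*(s)/(1-G^*(s))$ contributes a pole $1/(g_1 s)$, while the bracket $1-s\Pi(\Gamma(s))/(s+\lambda(1-\Gamma(s)))$ tends to $\rho$; their product matches the subtracted $\lambda/s$ at leading order, so the internal $1/s$ singularity of $b^*(s)$ cancels. The key quantitative step is to carry every expansion one order further: because of the overall prefactor $2\lambda/s$, the constant term of $b^*(s)$ is determined by the coefficient of $s$ in the bracketed expression, so $G^*/(1-G^*)$ is needed through order $s$ and the inner fraction through order $s^2$. Collecting the surviving constant and re-expressing $g_2,g_3$ via $c^2=g_2/g_1^2-1$ and $\gamma=(2g_1^3-3g_1g_2+g_3)/(g_2-g_1^2)^{3/2}$ gives $\overline b_e=L_e\,\rho/(1-\rho)^2$.

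The main obstacle I anticipate is purely in the bookkeeping of these nested cancellations: writing the product $G^*(s)/(1-G^*(s))\cdot\bigl(1-\mathrm{frac}\bigr)=\lambda/s+c_0+c_1 s+O(s^2)$, the pole cancels against the subtracted $\lambda/s$ (confirming $\overline v=\lambda$), the constant $c_0$ must itself vanish before a finite limit exists, and only the coefficient $c_1$ delivers $\overline b_e=2\lambda c_1$; hence $G^*$, $\Gamma$, and $\Pi\circ\Gamma$ must all be expanded accurately to third order with several cancelling contributions tracked correctly. A secondary point requiring care is the justification of the termwise transformation and the final-value argument, i.e. that $r(t)=o(1)$ (rather than merely $O(1)$) legitimately forces $s\,\widetilde r(s)\to0$; this is exactly why the statement is conditional on Assumption~\ref{sweetAss1}, which supplies the existence of the asymptote needed to run the Abelian step.
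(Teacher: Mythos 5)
Your proposal is correct and takes essentially the same route as the paper's own proof: both identify $\overline{b}_e=\lim_{s\to 0^+}b^*(s)$ via an Abelian/Tauberian step justified by Assumption~\ref{sweetAss1}, and then evaluate the limit from \eqref{eq:bStarS} by expanding $G^*$ to third order, extracting the first three busy-period moments from the fixed-point equation \eqref{eq:BpfEqn}, and composing with the Pollaczek--Khinchine function \eqref{eq:KPformula}. The paper phrases the final evaluation as ``five applications of L'Hopital's rule,'' which is computationally equivalent to your Taylor-expansion bookkeeping, so this is a presentational rather than a substantive difference.
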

\begin{proof}
Let $\tilde{b}(\cdot)$ be such that $
v(t) = \lambda t + \tilde{b}(t).$ 
By Assumption~\ref{sweetAss1},
$
\lim_{t \to \infty} \tilde{b}(t) = \overline{b}_e.
$
Since the LST of $\lambda t$ is $\lambda/s$ and since the LST is a linear operator, we have from \eqref{eq:DaelyFormula} that for $\Re(s)>0$ the LST of $\tilde{b}(\cdot)$ is
\[
b^*(s)  = \int_0^\infty e^{-st} d \tilde{b}(t).
\]
By standard application of Tauberian theorems (see for example \cite{widder1959laplace}), we have
\begin{equation}
\label{eq:bSlimit}
\lim_{s \to 0} b^*(s) = \overline{b}_e.
\end{equation}
The remainder of the derivation deals with evaluation of  the limit \eqref{eq:bSlimit} by using \eqref{eq:bStarS} together with \eqref{eq:KPformula} in order to find $\overline{b}_e$. This is a combination of straightforward classic queuing calculations together with five applications of L'Hopital's rule. It requires us to evaluate the first three moments of the busy period by taking derivatives of \eqref{eq:BpfEqn} and setting $s\to 0$, and we get
\begin{align*}
b_1 &= (1+\lambda b_1) g_1, \\
b_2 &= \lambda b_2 g_1 + (1+\lambda b_1)^2 g_2, \\
b_3 &= \lambda b_3 g_1 + 3 \lambda (1+ \lambda b_1) b_2 g_2  + (1+\lambda b_1)^3 g_3,
\end{align*}
where, $b_1 = -\Gamma'(0)$, $b_2 = \Gamma''(0)$ and $b_3 = -\Gamma'''(0)$.
These values are well known and appear in many queueing texts, yet we present them here for completeness:
\begin{align*}
b_1 &= \mu^{-1} \frac{1}{1-\rho},\\
b_2 &= \frac{g_2}{(1-\rho)^3} =\mu^{-2}  \frac{c^2+1}{(1-\rho)^3},\\
b_3 &=\frac{g_3 (1-\rho) +3 \lambda g_2^2}{(1-\rho)^5} =
\mu^{-3} \frac{3 c^4 \rho +c^3 \gamma (1 - \rho )+3 c^2 (1+\rho)+2 \rho +1}{(1-\rho)^5}.
\end{align*}

\noindent
\end{proof}

\medskip
\noindent
Here are some observations:
\begin{itemize}
\item If $G$ follows an exponential distribution, then $c=1$ and $\gamma=2$, yielding $\overline{b}_e=0$, as is expected since in this case $\{D(t)\}$ is a Poisson process.
\item $L_e$ (and thus $\overline{b}_e$) is monotone increasing in both $c$ and $\gamma$. 
\item
As $\rho \to 1$, $L_e \to (c^4-1)/2$. This gives some insight into the form of the variance curve of heavy traffic systems, showing how the squared coefficient of variation of $G$ plays a role when $\rho \approx 1$: When $c^2>1$ the y-intercept is positive, and when $c^2<1$ the y-intercept is negative. Further, as typical for heavy traffic systems, only the first two moments of the service time play a role. The skewness coefficient $\gamma$ does not matter when $\rho \approx 1$.
\end{itemize}

\subsection{M/G/1 queue: On a conjecture by Daley}
\label{sec:daley}

In the M/M/1 queue case, (\ref{eq:DaelyFormula}) yields $v^*(s)=\lambda/s$ which corresponds to the variance curve $v(t)=\lambda t$. This is expected since for the stationary M/M/1 queue, $\{D(t)\}$ is a Poisson process.  In \cite{daley1975fso}, Daley conjectures that the reverse direction is also true:
\begin{center}
 {\em Having $v(t)=\lambda t$ implies that the service time distribution is exponential}. 
\end{center}
Restated in terms of LSTs using (\ref{eq:DaelyFormula}), the conjecture is that $b^*(s) = 0$ only for the LST corresponding to a non-negative probability distribution such that $G^*(s) = 1/(1 + s g_1)$ with $g_1 > 0$. Proving Daley's conjecture would generalize a result by Finch, \cite{Finch0281}, stating that all stationary M/G/1 queues with an output Poisson process are M/M/1 queues.

Our expression for $\overline{b}_e$ in Proposition~\ref{prop:statAsymp} gives a necessary condition for $v(t) = \lambda t$:
\begin{center}
 {\em $v(t)=\lambda t$ only if $L_e=0$}. 
\end{center}
\begin{figure}[t]
\begin{center}
\includegraphics[width=12cm]{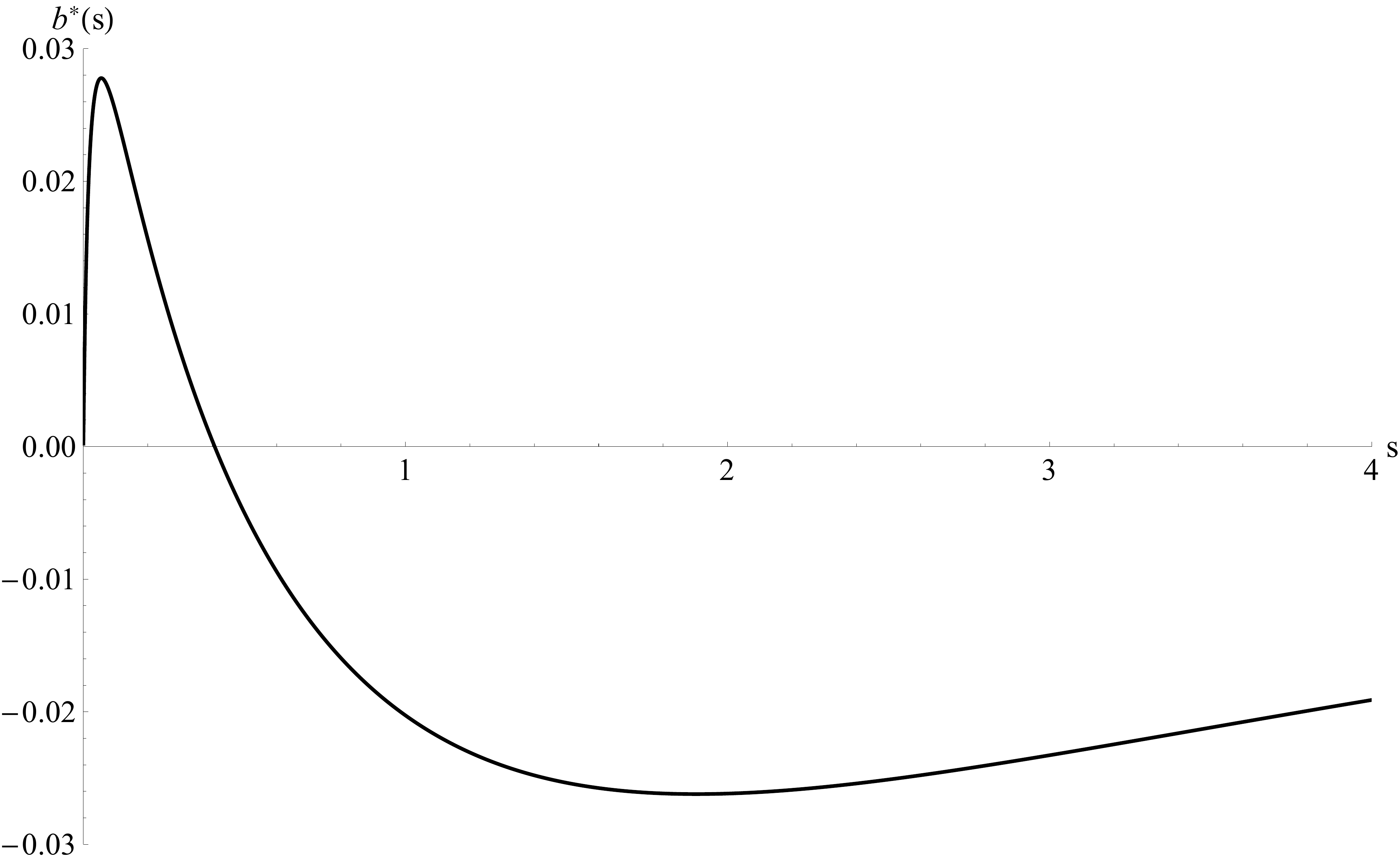}
\end{center}
\caption{A numerical illustration that the claim $L_e=0 \Rightarrow v(t)=\lambda t$ is {\bf not} correct.
\label{figDNon}}
\end{figure}
At this point it is tempting to believe that if $L_e=0$ (i.e. $\overline{b}_e=0$) then $v(t) = \lambda t$.  This could then be used to disprove Daley's conjecture, since $L_e$ only depends on the first three moments of $G$ and it is known that the exponential distribution is not characterized (within the class of non-negative distributions) by the first three moments.  For example, consider a mixture of an exponential random variable with point masses at $1/2,\,\, 3/2,\,\, 5/2$ and $9/2$, with LST: 
\[
G^*(s)=\frac{1}{384}\Big(
192\frac{1}{1+s}
+147e^{-\frac{1}{2}s}
+8e^{-\frac{3}{2}s}
+30e^{-\frac{5}{2}s}
+7e^{-\frac{9}{2}s}
\Big).
\]
As for the mean~$1$ exponential distribution, this distribution yields,
\[
-G^{*'}(0) = 1,
\qquad
G^{*''}(0) = 2,
\qquad
-G^{*'''}(0) = 6.
\]
and thus $L_e=0$.  However, by numerical evaluation of $b^*(s)$, it is clear that $b^*(s) \neq 0$ and thus in this case $v(t) \neq \lambda t$, but rather,
\[
v(t) = \lambda t + o(1),
\]
where the $o(1)$ term is not identically $0$.  The numerical evaluation was performed for the case of $\lambda =3/4$ (and $\mu=1$), and the function $b^*(s)$ is shown in Figure~\ref{figDNon}. Note that in this case, $\Gamma(s)$ was found by iterating \eqref{eq:BpfEqn} for fixed $s$ over a fine grid of $s$. Daley's conjecture thus remains open.

\subsection{M/G/1 queue: Arbitrary initial conditions}

We are now able to use the steady state y-intercept to obtain the y-intercept for a system with an arbitrary distribution of the initial state.

\begin{proposition}
\label{prop:coupling}
Consider the M/G/1 queue with $\rho < 1$, $g_3 < \infty$ and an arbitrary initial distribution of the number of customers having a finite variance. If Assumption~\ref{sweetAss1} holds, then the y-intercept in \eqref{ass1} is given by 
\[
\overline{b}_\theta = \sigma^2_0 - \sigma^2_\pi + \overline{b}_e,
\]
where $\sigma^2_0 = \Var \big( Q(0) \big)$, $\sigma^2_\pi$ is the steady state variance of the number of customers in the system, and $\overline{b}_e$ is as in Proposition~\ref{prop:statAsymp}.
\end{proposition}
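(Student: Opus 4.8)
The plan is to compare the $\vc\theta$-started queue against a stationary version of the \emph{same} queue through a coupling, and then simply read off the difference of the two y-intercepts. Concretely, I would build both systems on one probability space so that they share the same Poisson arrival process $\{A(t)\}$ and the same i.i.d.\ service requirements attached to the post-$0$ arrivals, while their initial contents $Q(0)\sim\vc\theta$ and $Q_\pi(0)\sim\vc\pi$ are taken independent of each other and of all future input. Because $\rho<1$, the workload started from the larger of the two initial workloads reaches $0$ in almost surely finite time; I let $\tau$ be the first instant at which both systems are simultaneously empty. After $\tau$ the two systems occupy identical states and receive identical input, so they coincide for all $t\ge\tau$. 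In particular, writing $\Delta(t):=Q(t)-Q_\pi(t)$, we have $\Delta(t)=0$ for $t\ge\tau$.

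Next I would use the book-keeping identity $D(t)=A(t)+Q(0)-Q(t)$ in each system. Subtracting the two copies (they share $A(t)$) gives the exact relation
\[
D_\theta(t)=D_\pi(t)+W-\Delta(t),\qquad W:=Q(0)-Q_\pi(0),
\]
valid for every $t$ on the coupled space. Since $D_\pi(t)$ is an honest stationary M/G/1 departure process, Proposition~\ref{prop:statAsymp} (under Assumption~\ref{sweetAss1}) gives $\Var\big(D_\pi(t)\big)=\lambda t+\overline{b}_e+o(1)$, and as Assumption~\ref{sweetAss1} is also assumed for the $\vc\theta$-system both intercepts exist, so it suffices to evaluate
\[
\overline{b}_\theta-\overline{b}_e=\lim_{t\to\infty}\Big(\Var\big(D_\theta(t)\big)-\Var\big(D_\pi(t)\big)\Big).
\]

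Expanding the variance of $D_\theta(t)=D_\pi(t)+W-\Delta(t)$ produces two ``main'' terms plus three error terms:
\[
\Var\big(D_\theta(t)\big)-\Var\big(D_\pi(t)\big)=\Var(W)+2\Cov\big(D_\pi(t),W\big)+\Var\big(\Delta(t)\big)-2\Cov\big(W,\Delta(t)\big)-2\Cov\big(D_\pi(t),\Delta(t)\big).
\]
For the main terms I would use $W=Q(0)-Q_\pi(0)$, the independence of $Q(0)$ from the stationary system, the independence of the post-$0$ arrivals from $Q_\pi(0)$, and $D_\pi(t)=A(t)+Q_\pi(0)-Q_\pi(t)$, to obtain
\[
\Var(W)=\sigma_0^2+\sigma_\pi^2,\qquad \Cov\big(D_\pi(t),W\big)=-\Cov\big(D_\pi(t),Q_\pi(0)\big)\longrightarrow-\sigma_\pi^2,
\]
where the limit uses $\Var\big(Q_\pi(t)\big)=\sigma_\pi^2$ and the decay of the stationary autocovariance $\Cov\big(Q_\pi(0),Q_\pi(t)\big)\to0$. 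Thus the main terms contribute $\sigma_0^2+\sigma_\pi^2-2\sigma_\pi^2=\sigma_0^2-\sigma_\pi^2$, exactly the claimed value.

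The hard part will be showing that the three $\Delta(t)$-terms vanish, which is where the finite-third-moment hypothesis does its work. Since $\Delta(t)=\Delta(t)\mathbf 1_{\{\tau>t\}}$ and $A(t)\le A(\tau)$ on $\{\tau>t\}$, with the crude bound $|\Delta(t)|\le Q(0)+Q_\pi(0)+2A(\tau)$ there, dominated convergence gives $\Var\big(\Delta(t)\big)\to0$ and $\Cov\big(W,\Delta(t)\big)\to0$ once $Q(0),\,Q_\pi(0),\,A(\tau)$ have finite second moments. The delicate term is $\Cov\big(D_\pi(t),\Delta(t)\big)$: reducing it via $D_\pi(t)=A(t)+Q_\pi(0)-Q_\pi(t)$ isolates the genuinely awkward piece $\Cov\big(A(t),\Delta(t)\big)$, in which $A(t)$ grows linearly. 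I would again bound $\Expect[A(t)\Delta(t)\mathbf 1_{\{\tau>t\}}]\to0$ by $A(t)\le A(\tau)$ and dominated convergence, while the centering piece $\lambda t\,\Expect[\Delta(t)]$ is killed by Cauchy--Schwarz, $|\Expect[\Delta(t)]|\le \|Q(0)+Q_\pi(0)+2A(\tau)\|_2\,\Prob(\tau>t)^{1/2}=o(t^{-1})$. All of these estimates require $\Expect[\tau^2]<\infty$ (so that $\Prob(\tau>t)=o(t^{-2})$) and $\Expect[A(\tau)^2]<\infty$, and the main obstacle is precisely to justify these: the coupling time $\tau$ is dominated by a first-passage time of the M/G/1 workload to $0$, whose second moment is finite exactly when the busy period has a finite second moment, i.e.\ when $g_3<\infty$, which together with the finite variance of $Q(0)$ and a Wald-type control of the initial workload makes both $\Expect[\tau^2]$ and $\Expect[A(\tau)^2]$ finite. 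Combining the vanishing of the $\Delta(t)$-terms with the main-term computation yields $\overline{b}_\theta=\sigma_0^2-\sigma_\pi^2+\overline{b}_e$.
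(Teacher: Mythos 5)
Your coupling reaches the right formula but is genuinely different from the paper's, and the difference creates one real gap. The paper couples the $\vc\theta$-system with a system started \emph{empty} on the same input, serving the initial $Q(0)$ customers at lowest (preemptive) priority; this forces the exact eventual identity $D_\theta(t)=D_0(t)+Q(0)$, in which the offset $Q(0)$ is independent of the entire comparison system, so the variance splits with no covariance term at all: $\overline{b}_\theta=\overline{b}_0+\Var\big(Q(0)\big)$, and specializing $\vc\theta$ to the stationary law gives $\overline{b}_0=\overline{b}_e-\sigma^2_\pi$, whence the result. You instead couple with an independent stationary copy, and your offset $W=Q(0)-Q_\pi(0)$ is \emph{not} independent of the reference process: you are forced to evaluate $\Cov\big(D_\pi(t),W\big)=-\Cov\big(D_\pi(t),Q_\pi(0)\big)$, and your evaluation invokes the decay $\Cov\big(Q_\pi(0),Q_\pi(t)\big)\to 0$. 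That is precisely Assumption~\ref{sweetAss3} of the paper, which Proposition~\ref{prop:coupling} deliberately does \emph{not} assume: the paper records it as established only for the FCFS case (personal communication with Fralix) and otherwise open. So under the stated hypotheses (Assumption~\ref{sweetAss1} only), your main-term computation has a genuine gap at this step. The gap is fillable with your own machinery --- couple two independent stationary copies and reuse your $\Expect[\tau^2]$ bounds to get $\big|\Cov\big(Q_\pi(0),Q_\pi(t)\big)\big|\le C\,\Prob(\tau>t)^{1/2}\to 0$ --- but you neither do nor flag this, whereas the paper's choice of comparison system makes the issue vanish by construction.

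Two smaller points. First, your error-term analysis is sound in outline and in fact more explicit than the paper's own proof, which compresses the contribution of the event $\{T>t\}$ into an unexplained $o(1)$; the integrability you identify ($\Expect[\tau^2]<\infty$, $\Expect[A(\tau)^2]<\infty$) is the right bookkeeping. But your attribution is slightly off: the busy period has a finite \emph{second} moment iff $g_2<\infty$ (since $b_2=g_2/(1-\rho)^3$); what requires $g_3<\infty$ is the square-integrability of the stationary initial state (e.g.\ $\sigma^2_\pi$ and the second moment of the stationary workload), which is what makes the emptying time from the stationary start square-integrable. Second, for $\Var\big(D_\pi(t)\big)=\lambda t+\overline{b}_e+o(1)$ from Proposition~\ref{prop:statAsymp} to apply, your comparison copy must start from the full time-stationary state (queue length together with the residual service of the customer in service), not merely $Q_\pi(0)\sim\vc\pi$; your construction should say so. If you adopt the paper's device --- comparison with the empty system under a low-priority rule, so that the offset is $Q(0)$ itself and independent of everything else --- every covariance term disappears, and neither the autocovariance-decay issue nor the stationary-profile subtlety arises.
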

\begin{proof}
We use a coupling argument. Consider the following two systems under the same sample path of the arrival process and service times. Denote the systems by $0$ and $\theta$. System~$0$ starts empty and system~$\theta$ starts with $Q(0)$ customers in the system. Operate system~$\theta$ by giving low priority to the initial $Q(0)$ customers, that is, these customers are only served with preemption when there are no other customers in system. This implies that the first $Q(0)$ customers of system $\theta$ are being served only at times that coincide with idle periods of System~$0$. Thus after a finite time $T$, the trajectories of the queue lengths of the two systems coincide. Thus for $t \ge T$, $D_\theta(t) = D_0(t) + Q(0)$, where $D_i(\cdot)$ denote the output counting processes. This yields
\[
\lambda t + \overline{b}_\theta= \lambda t + \overline{b}_0 + o(1) + \Var(Q(0)),
\]
Taking $t \to \infty$ we obtain,
\begin{equation}
\label{eq:couple}
\overline{b}_\theta = \overline{b}_0 + \Var(Q(0)).
\end{equation}
Selecting now $Q(0)$ to be distributed according to the steady state distribution we get from \eqref{eq:couple},
\[
\overline{b}_0 = \overline{b}_e - \sigma^2_\pi.
\]
Applying this again in (\ref{eq:couple}) we obtain the result. Note that a similar coupling argument also holds for the non-preemptive case.
\end{proof}
\medskip

As is well known, $\sigma^2_\pi$ can be obtained directly from $\Pi(\cdot)$, yet it is a cumbersome calculation. We state it here for completeness:
\begin{equation}
\label{eq:StatPi}
\sigma^2_\pi =
\Big(\big( \frac{1}{4} c^4 -\frac{1}{3} \gamma c^3 +\frac{1}{2} c^2 -\frac{1}{12} \big) \rho^3
+ \big(\frac{1}{3} \gamma  c^3 - \frac{3}{2} c^2 + \frac{5}{6} \big) \rho^2 +
\big(  \frac{3}{2} c^2 - \frac{3}{2} \big) \rho + 1
\Big)
\frac{
 \rho
}
{(1-\rho)^2}.
\end{equation}
As a result of the above, the y-intercept for a system that starts empty is:
\[
\overline{b}_0 = 
-(1-L_0)
\frac{\rho}{(1-\rho)^2}, \,\,\, \mbox{with} \,\,\,
L_0=\frac{
(3 c^4 - 4 \gamma c^3  + 6 c^2 - 1)\rho^3+
(4 \gamma c^3 - 6 c^2 -  2)\rho^2+
(-6 c^2 + 6)\rho
}{12}.
\]
\medskip
\noindent
Here are some observations:
\begin{itemize}
\item In the M/M/1 queue, $L_0=0$ and thus $\overline{b}_0=-\rho/(1-\rho)^2$. As expected, this is in agreement with the case $\rho<1$ in~\eqref{eq:mm1Var}. 
\item As $\rho \to 1$, $L_0 \to (c^2-1)^2/4$. This implies that in heavy-traffic, $c^2$ plays a similar role with respect to the sign of the y-intercept as it did in the stationary case: In this case when $c^2>3$ the y-intercept is positive, and when $c^2 \le 3$ the y-intercept is negative. Compare with the remarks following Proposition~\ref{prop:statAsymp}.
\end{itemize}

\subsection{M/G/1 queue: Asymptotic covariance}

We now exploit the y-intercept of the stationary system, $\overline{b}_e$, to find the asymptotic covariance between $A(t)$ and $Q(t)$, that is,
\[
c_{A,Q} := \lim_{t \to \infty} \Cov \big( A(t), Q(t) \big).
\]
In similar spirit to the y-intercept term, we need some further assumptions to ensure the above limit exists:
\begin{ass}
\label{sweetAss2}
\[
\lim_{t \to \infty} \Var\big(Q(t)\big) = \sigma^2_\pi.
\]
\end{ass}
\begin{ass}
\label{sweetAss3}
\[
\lim_{t \to \infty} \Cov\big(Q(0),Q(t)\big) = 0.
\]
\end{ass}
As conveyed by personal communication with Fralix, Assumptions \ref{sweetAss2} and \ref{sweetAss3} hold for the stable FCFS M/G/1 queue having a finite third moment. Nevertheless, we are not able to establish that they hold for the M/G/1 queue with an arbitrary work-conserving scheduling policy.  Note that Assumption~\ref{sweetAss2} would follow from uniform integrability of the sequence $\{Q(t)^2,\, t \ge 0\}$. It is also worth mentioning that Assumption~\ref{sweetAss2} does not hold in general for positive recurrent Markov chains with finite stationary variance. We conjecture that both of the above assumptions hold for stable ($\rho <1$) M/G/1 queues operating under a work-conserving policy, with $\Var\big(Q(0)\big)<\infty$ and $g_3<\infty$.

This is our result regarding $c_{A,Q}$:
\begin{proposition}
\label{thm:covConstant}
Consider the M/G/1 queue with $\rho < 1$ and  $g_3 < \infty$  and arbitrary distribution of $Q(0)$ having a finite variance. If Assumptions~\ref{sweetAss1}, \ref{sweetAss2} and \ref{sweetAss3} hold then,
\[
\lim_{t\to\infty} \Cov \big(A(t),Q(t)\big) =
\frac{\rho}{(1-\rho)^2}\Big(
1+
(c^2-1)\frac{\rho(2-\rho)}{2}
\Big).
\]
\end{proposition}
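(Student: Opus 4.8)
The plan is to exploit the sample-path flow-balance identity $D(t) = A(t) + Q(0) - Q(t)$, already introduced in the discussion preceding Conjecture~\ref{conj1}. Taking the variance of both sides, using that the Poisson arrival stream on $(0,t]$ is independent of the initial state $Q(0)$ (so that $\Cov(A(t),Q(0))=0$) and that $\Var(A(t))=\lambda t$, and writing $\sigma_0^2 := \Var(Q(0))$, I obtain
\[
v(t) = \lambda t + \sigma_0^2 + \Var\big(Q(t)\big) - 2\,\Cov\big(A(t),Q(t)\big) - 2\,\Cov\big(Q(0),Q(t)\big).
\]
Solving this for the covariance of interest gives an exact identity valid for every $t$,
\[
2\,\Cov\big(A(t),Q(t)\big) = \big(\lambda t - v(t)\big) + \sigma_0^2 + \Var\big(Q(t)\big) - 2\,\Cov\big(Q(0),Q(t)\big).
\]

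Next I would pass to the limit $t\to\infty$. Assumption~\ref{sweetAss1} gives $v(t) = \lambda t + \overline{b}_\theta + o(1)$ (recall $\overline{v}=\lambda$ for $\rho<1$), so that $\lambda t - v(t) \to -\overline{b}_\theta$; Assumption~\ref{sweetAss2} gives $\Var(Q(t)) \to \sigma_\pi^2$; and Assumption~\ref{sweetAss3} gives $\Cov(Q(0),Q(t)) \to 0$. Hence
\[
2\,c_{A,Q} = -\overline{b}_\theta + \sigma_0^2 + \sigma_\pi^2.
\]
Substituting the coupling formula $\overline{b}_\theta = \sigma_0^2 - \sigma_\pi^2 + \overline{b}_e$ from Proposition~\ref{prop:coupling}, the two $\sigma_0^2$ terms cancel and I am left with the pleasantly initial-condition-free expression
\[
c_{A,Q} = \sigma_\pi^2 - \tfrac12\,\overline{b}_e.
\]

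It then remains to substitute the closed forms $\overline{b}_e = L_e\,\rho/(1-\rho)^2$ from Proposition~\ref{prop:statAsymp} and the stationary variance $\sigma_\pi^2$ from \eqref{eq:StatPi}. Both share the common factor $\rho/(1-\rho)^2$, so the task reduces to simplifying the cubic-in-$\rho$ polynomial obtained from the coefficients of $\sigma_\pi^2$ minus half those of $L_e$. I expect the $\rho^3$ terms to cancel identically (the $\rho^3$ coefficient of $\tfrac12 L_e$ coincides with that of $\sigma_\pi^2$), the remaining coefficients collapsing to $1 + (c^2-1)\rho - \tfrac12(c^2-1)\rho^2 = 1 + (c^2-1)\rho(2-\rho)/2$, which yields the stated formula. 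The genuinely delicate point is not the algebra but the passage to the limit: the whole argument hinges on Assumptions~\ref{sweetAss2} and~\ref{sweetAss3}, which are exactly the properties the text flags as provable (via Fralix) only in the FCFS case. Without them the individual terms in the flow-balance variance identity need not converge, even though their combination $v(t)$ does, so the careful accounting of which limits exist is where the real content of the proof lies.
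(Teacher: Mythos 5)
Your proof is correct and takes essentially the same route as the paper: both take the variance of the flow-balance identity $D(t)=A(t)+Q(0)-Q(t)$, use Assumptions~\ref{sweetAss1}--\ref{sweetAss3} together with Proposition~\ref{prop:coupling} to cancel the initial-condition terms and arrive at $\lim_{t\to\infty}\Cov\big(A(t),Q(t)\big)=\sigma^2_\pi-\overline{b}_e/2$, and then substitute the closed forms of $\sigma^2_\pi$ and $\overline{b}_e$. Your algebraic prediction is also accurate: the $\rho^3$ coefficients of $\sigma^2_\pi$ and $\tfrac12 L_e\,$ coincide and cancel, leaving $1+(c^2-1)\rho-\tfrac12(c^2-1)\rho^2$, which is the stated expression.
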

\begin{proof}
Taking the variance of $D(t)=A(t)+Q(0)-Q(t)$ and using Proposition~\ref{prop:coupling} (together with $g_3<\infty$ and Assumption~\ref{sweetAss1}) for the left hand side, we obtain
\begin{equation}
\nonumber
\lambda t + \Var(Q(0)) - \sigma^2_\pi + \overline{b}_e  + o(1)= \lambda t + \Var(Q(0)) + \Var(Q(t)) - 2 \Cov(Q(0),Q(t)) - 2 \Cov (A(t),Q(t)).
\end{equation}
Now canceling terms, and using Assumptions~\ref{sweetAss2} and \ref{sweetAss3}, we obtain,
\[
\lim_{t\to\infty} \Cov\big(A(t),Q(t)\big) = \sigma^2_\pi - \frac{\overline{b}_e}{2}.
\]
Using the expressions of $\overline{b}_e$ and $\sigma^2_\pi$, we obtain the result after some simplification.
\end{proof}

\medskip

Note that the above result implies that for the M/M/1 queue,
\[
\lim_{t \to \infty} \Cov\big(A(t),Q(t)\big) = \frac{\rho}{(1-\rho)^2}.
\]
We are not aware of an alternative derivation of this quantity for the M/M/1 queue.

\section{Conclusion}
\label{sec:conc} 

In going through the detailed MAP derivations for M/M/1/K queues, we have illustrated how asymptotic quantities such as $\overline{b}$ may be obtained explicitly. The key is to have explicit expressions for mean hitting times in the underlying Markov chain. Further, by using the Drazin inverse we have gained some further insight into the BRAVO effect. In plotting the graphs of $\overline{d}_v$ and $\overline{d}_b$, we observe that spikes occur when $\lambda \approx \mu$ in a similar fashion to the BRAVO effect. 

For the M/G/1 queue, the formal calculations are of a different flavor, but are also generally tedious. Nevertheless, in stating our results, we have had to resort to Assumptions~\ref{sweetAss1}--\ref{sweetAss3}, which, as far as we know, are not established.  We believe that these assumptions hold when $G$ has a finite third moment.  Personal communication with Brian Fralix has demonstrated the validity of Assumptions~\ref{sweetAss2} and \ref{sweetAss3} in the FCFS case, yet his methods rely on this restriction.  We believe these hold in greater generality and leave this challenge for future work.  Towards this end, it is worthwhile to refer the reader to \cite{fralix2012time} and \cite{fralix2010new}, where similar transient analysis of the M/G/1 queue is undertaken; see also \cite{pakes71} for classic results in the stationary case.

Besides the ``transient moment problems'' associated with Assumptions~\ref{sweetAss1}--\ref{sweetAss3}, our work has highlighted two other open questions. The first is Daley's conjecture discussed in detail in Section~\ref{sec:daley} above. It remains open.  The second is finding formulas for $\overline{b}_\theta$ when $\rho>1$.  In the case $\rho=1$ we conjecture this term does not exist (as for the M/M/1 queue), but for the case $\rho>1$ it is an interesting challenge to search for a closed form formula in terms of the moments of $G$.

\vspace{30pt}
\begin{center}{\bf Acknowledgment}\end{center}
We thank Onno Boxma, Brian Fralix and Guy Latouche for useful discussions and advice. Yoni Nazarathy is supported by Australian Research Council (ARC) grants DP130100156 and DE130100291. Yoav Kerner is supported by Israeli Science Foundation (ISF) grant 1319/11.   Yoav Kerner and Yoni Nazarathy also thank EURANDOM for hosting and support. Sophie Hautphenne and Peter Taylor are supported by Australian Research Council (ARC) grant DP110101663.

\bibliography{LitDB}

\appendix
\end{document}